\documentclass[10pt]{amsart}
\usepackage{amssymb,latexsym,amsmath}
\usepackage{graphics}                 % Packages to allow inclusion of graphics
\usepackage{color}                    % For creating coloured text and background
\usepackage{hyperref}                 % For creating hyperlinks in cross references
\usepackage[all]{xy}
\begin{document}

\newtheorem{definition}{Definition}
\newtheorem{theorem}{Theorem}[section]
\newtheorem{proposition}[theorem]{Proposition}
\newtheorem{lemma}[theorem]{Lemma}
\newtheorem{corollary}[theorem]{Corollary}
\newtheorem{question}[theorem]{Question}
\newtheorem{remark}[theorem]{Remark}
\newtheorem{example}[theorem]{Example}
\newtheorem{conjecture}[theorem]{Conjecture}

\newtheorem{correction}{Correction}

\def\A{{\mathbb{A}}}
\def\C{{\mathbb{C}}}

\def\L{{\mathbb{L}}}
\def\O{{\mathcal{O}}}
\def\p{{\mathbf{p}}}

\title{Lawson homology groups of Chow varieties }
%\author{Youming Chen and Wenchuan Hu}

\subjclass[2020]{Primary 14C05, 14C25, 14F43}
\date{\today}

\keywords{Algebraic cycle; Chow variety; Lawson homology group}

%\address{
%School of Mathematics\\
%Sichuan University\\
%Chengdu 610064\\
%P.R. China}

%\email{huwenchuan@gmail.com,youmingchen@cqut.edu.cn}

\author[Y. Chen]{Youming Chen }
\address{School of Mathematical Science, Chongqing University of Technology, Chongqing 400054, P.R. China}%
\email{youmingchen@cqut.edu.cn}%

\author[W. Hu]{Wenchuan Hu}
\address{School of Mathematics, Sichuan University, Chengdu 610064, P.R. China}%
\email{huwenchuan@gmail.com}%

\begin{abstract}
Let $C_{p,d}(\mathbb{P}^n)$ denote the Chow variety of effective algebraic $p$-cycles of degree $d$ in complex projective space $\mathbb{P}^n$. In this paper, we compute the rational Lawson
homology groups $L_qH_k(C_{p,d}(\mathbb{P}^n))_\mathbb{Q}$ for $0 \leq 2q\leq k \leq 2d$.  Additionally, we prove that the rational Lawson homology groups of a natural completion of the Chow monoid of algebraic $p$-cycles in projective spaces are isomorphic to the corresponding rational singular homology groups. We also establish the stability of Lawson homology groups of Chow varieties under natural embeddings and algebraic suspension maps within a specified range.
\end{abstract}

\maketitle

\tableofcontents

\section{Introduction}

Let $\mathbb{P}^n$ denote the complex projective space of dimension $n$.
The Chow variety $C_{p,d}(\mathbb{P}^n)$ is the moduli space of effective algebraic $p$-cycles
of degree $d$ in $\mathbb{P}^n$.
Understanding its structure is crucial,
as it connects to fundamental problems in algebraic geometry, such as the Hodge Conjecture.

The Chow variety $C_{p,d}(\mathbb{P}^n)$ is central to the theory of algebraic cocycles
and the morphic cohomology developed by Friedlander and Lawson \cite{FL}.
Grothendieck \cite{Grothendieck}
and Friedlander--Mazur \cite[\S 7]{Friedlander-Mazur} conjectured
that the filtration on the singular cohomology of a smooth complex projective variety,
with rational coefficients,
induced by the cycle class map of morphic cohomology groups,
aligns with the Hodge filtration, corresponding to the maximal sub-Hodge structure.

Significant efforts over decades have focused on elucidating the structure of Chow varieties.
A foundational result by Chow and Van der Waerden establishes
that $C_{p,d}(\mathbb{P}^n)$ is a closed complex projective algebraic set
\cite{Chow-Waerden, GKZ, Kollar},
naturally equipping it with the structure of a compact Hausdorff space.
The topological and algebraic properties of $C_{p,d}(\mathbb{P}^n)$ are of particular interest.

The topological structure of the Chow variety $C_{p,d}(\mathbb{P}^n)$
becomes highly complex for large $p$, $n$, and $d$.
For instance, determining its dimension is nontrivial \cite{Eisenbud-Harris, Lehmann}.
While $C_{p,d}(\mathbb{P}^n)$ is easily seen to be a path-connected topological space,
proving its simple connectedness \cite{Horrocks, Fujiki, Lawson1} or computing topological invariants--such as higher homotopy groups,
(singular) homology groups \cite{Lawson1, Hu-Deg2, Hu-2015},
or Euler characteristic \cite{Lawson-Yau, Hu-2013} is challenging.
The complete characterization of higher homotopy or homology groups remains an open problem.
A concise survey of the structure of Chow varieties and related open questions is available
in \cite{Hu-2021} and its references.

The algebraic structure of the Chow variety $C_{p,d}(\mathbb{P}^n)$,
as a projective algebraic set, is notably complex.
Generally, $C_{p,d}(\mathbb{P}^n)$ is singular and non-irreducible,
with the number of irreducible components unknown for cases like $p=1$, $n=3$, and large $d$.
Shafarevich posed the question of whether each irreducible component of $C_{p,d}(\mathbb{P}^n)$
is rational \cite{Shafarevich},
to which a negative answer was provided \cite{Hu-2021}.
Recently we have computed Chow groups of $C_{p,d}(\mathbb{P}^n)$ in a certain range
(see \cite{Chen-Hu}).

The intricate algebraic structure of Chow varieties has limited progress in this area.
As a key algebraic invariant of an algebraic variety or scheme,
the Lawson homology groups are central to the study of algebraic cycles.
Notably, the Lawson homology group $L_1H_k(C_{p,d}(\mathbb{P}^n))$ for 1-cycles
has been shown to correspond to the associated homology group for all $d > 0$
and $0 \leq p \leq n$ \cite{Hu-2021}.

Furthermore, it has been wildly conjectured that
\begin{conjecture}[{\cite[Conj.~4.9]{Hu-2021}}]\label{conj1}
For all $d\geq 0$ and $0\leq p\leq n$,
$$
L_qH_k(C_{p,d}(\mathbb{P}^n))\cong H_{k}(C_{p,d}(\mathbb{P}^n),\mathbb{Z}),\ \forall k\geq 2q\geq 0,
$$
where $H_{*}(-,\mathbb{Z})$ denotes the singular homology group in integer coefficients.
\end{conjecture}

The Chow variety $C_{0,d}(\mathbb{P}^n)$
is the $d$-th symmetric product $\mathrm{SP}^d(\mathbb{P}^n)$ of $\mathbb{P}^n$.
For $p=0$, verifying Conjecture \ref{conj1} in this special case is crucial
for understanding the structure of symmetric products of projective spaces.
Given its significance, we formulate it as a separate conjecture.

\begin{conjecture}\label{conj2}
For all $d\geq 0$, $q\geq 0$ and $0\leq p\leq n$, the cycle class map
\begin{equation}\label{eqn1.1}
cl:L_qH_k(\mathrm{SP}^d(\mathbb{P}^n))\stackrel{\cong}{\longrightarrow} H_{k}(\mathrm{SP}^d(\mathbb{P}^n),\mathbb{Z})
\end{equation}
is an isomorphism.
\end{conjecture}

It may be overly optimistic to expect Conjecture \ref{conj1} to hold for integer coefficients,
even for the Chow variety parameterizing zero-cycles.
However, no evidence suggests that the cycle class map $cl$ is neither injective nor surjective.
In contrast, Conjecture \ref{conj2} is more likely to hold for $0 \leq q \leq d$.
Moreover, Equation \eqref{eqn1.1} can be shown
to hold after tensoring with rational coefficients (see Lemma \ref{lemma3.6}).
Consequently, we propose a weaker version of Conjecture \ref{conj1} as follows.

\begin{conjecture}\label{conj3}
For all $d\geq 0$ and $0\leq p\leq n$,
$$
L_qH_k(C_{p,d}(\mathbb{P}^n))_{\mathbb{Q}}\cong H_{k}(C_{p,d}(\mathbb{P}^n),\mathbb{Q}),\ \forall k\geq 2q\geq 0.
$$
%holds for all $ q\geq 0$, $d\geq 0$ and $n\geq 0$.
Here $H_{*}(-,\mathbb{Q})$ denotes the singular homology group in rational coefficients.
\end{conjecture}

This conjecture asserts that all even-dimensional topological cycles (in rational coefficients)
on Chow varieties are algebraic. Additionally, we conjecture
that all odd Betti numbers of $C_{p,d}(\mathbb{P}^n)$ vanish.

For a complex projective variety $X$,
the Lawson homology group of $q$-cycles is defined as
$$ L_qH_k(X) = \pi_{k-2q}(\mathcal{Z}_q(X)), $$
where $\mathcal{Z}_q(X)$ is the group of algebraic $q$-cycles on $X$,
endowed with a natural topology \cite{Lawson1, Friedlander1, Lawson2}.
Define $L_qH_k(X)_{\mathbb{Q}} := L_qH_k(X) \otimes \mathbb{Q}$.
Natural cycle class maps $cl: L_qH_k(X) \to H_k(X, \mathbb{Z})$ and $cl_{\mathbb{Q}}:
L_qH_k(X)_{\mathbb{Q}} \to H_k(X, \mathbb{Q})$ exist.
For further details on Lawson homology groups, see \cite{Lawson1, Friedlander1}.

Lawson homology groups encode both algebraic and topological information about a variety,
but their computation is generally very challenging.
For most varieties, the structure of these groups remains poorly understood.
Major conjectures, such as the generalized Hodge conjecture and the Suslin conjecture,
aim to elucidate the nature of Lawson homology groups.

In this paper,
we establish that the Lawson homology groups $L_qH_k(C_{p,d}(\mathbb{P}^n))_{\mathbb{Q}}$
coincide with their corresponding singular homology groups
in rational coefficients for $0 \leq 2q \leq k \leq 2d$,
thereby verifying Conjecture \ref{conj3} within this range.
Specifically, we compute $L_qH_k(C_{p,d}(\mathbb{P}^n))_{\mathbb{Q}}$ for $0 \leq 2q \leq k \leq 2d$,
leveraging prior results on singular homology groups in this range \cite{Hu-2015}.
Additionally, we demonstrate the stability of Lawson homology groups of Chow varieties
under natural embeddings or the algebraic suspension map within a specific range.
Precise statements are provided in the next section.

\thanks{\emph{Acknowledgements}: }
This work is partially supported by the National Natural Science Foundation of China
(No. 12126309, 12126354),
and the Natural Science Foundation of Chongqing (No. CSTB2025NSCQ-GPX0948).

\section{Main results}

%Let $X:=\mathrm{SP}^d\mathbb{P}^n$ be the $d$-th symmetric product of the projective space of dimension $n$.

%\textbf{Question:}Is the Lawson homology group of $p$-cycles  $\Ch_p(X)$ is isomorphic to the singular homology group $H_{2p}(X,\mathbb{Z})$?

%We know it is true in rational coefficients.

%Let $X$ be the $d$-th symmetric product of the complex projective space of dimension $n$.

%Question: Is the Lawson homology group of $p$-cycles $Ch_p(X)$ is isomorphic to the singular homology group %$H_{2p}(X,\mathbb{Z})$ with integer coefficients for all $p, d, n$?

%We know the answer to the question is positive in rational coefficients.

This paper adopts notation primarily from Lawson's work \cite{Lawson1}.
Let $l_0 \subset \mathbb{P}^n$ be a fixed $p$-dimensional linear subspace.
For each $d \geq 1$, we define the analytic embedding
\begin{equation}\label{eqn2.1}
i: C_{p,d}(\mathbb{P}^n) \hookrightarrow C_{p,d+1}(\mathbb{P}^n),
\quad c \mapsto c + l_0.
\end{equation}
This sequence of embeddings yields the union
$$ \mathcal{C}_{p}(\mathbb{P}^n) = \lim_{d \to \infty} C_{p,d}(\mathbb{P}^n), $$
termed the Friedlander completion of $C_{p,d}(\mathbb{P}^n)$.
The topology on $\mathcal{C}_{p}(\mathbb{P}^n)$ is the weak topology with respect to
$\{C_{p,d}(\mathbb{P}^n)\}_{d=1}^{\infty}$,
where a set $C \subset \mathcal{C}_{p}(\mathbb{P}^n)$ is closed
if and only if $C \cap C_{p,d}(\mathbb{P}^n)$ is closed
in $C_{p,d}(\mathbb{P}^n)$ for all $d \geq 1$.
For further details, see \cite{Lawson1}.

We define the Lawson homology group $L_qH_k(\mathcal{C}_p(\mathbb{P}^n))$ of algebraic $q$-cycles
on $\mathcal{C}_p(\mathbb{P}^n)$ as the direct limit
$\lim\limits_{d\to \infty}L_qH_k(C_{p,d}(\mathbb{P}^n))$.
Meanwhile, we have
%$$\lim_{d\to \infty}L_qH_k(C_{p,d}(\mathbb{P}^n))_{\mathbb{Q}}$$
$$
\lim_{d\to \infty}H_k(C_{p,d}(\mathbb{P}^n),\mathbb{Q})=H_k(\lim_{d\to \infty}C_{p,d}(\mathbb{P}^n),\mathbb{Q})
=H_k(\mathcal{C}_{p}(\mathbb{P}^n),\mathbb{Q})
$$
for all $0\leq p\leq n$ and $q\geq 0$.

Our first main result states as follows.

\begin{theorem} \label{Thm2.1}
For $0\leq p\leq n$ and $q\geq 0$, we have isomorphisms
$$
L_qH_k(\mathcal{C}_p(\mathbb{P}^n))_{\mathbb{Q}}\cong H_{k}(\mathcal{C}_p(\mathbb{P}^n),\mathbb{Q}).
$$
\end{theorem}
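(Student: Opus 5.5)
The key input is the Lawson complex suspension theorem together with the known rational homotopy type of $\mathcal{C}_p(\mathbb{P}^n)$. Lawson \cite{Lawson1} shows that the inclusions $C_{p,d}(\mathbb{P}^n)\hookrightarrow C_{p,d+1}(\mathbb{P}^n)$ are highly connected in the limit. He also shows that the group completion of $\mathcal{C}_p(\mathbb{P}^n)$ is homotopy equivalent to $\mathcal{Z}_p(\mathbb{P}^n)\simeq\prod_{j=0}^{n-p}K(\mathbb{Z},2j)$. In particular, $\mathcal{C}_p(\mathbb{P}^n)$ has the rational homology of $\prod_{j=1}^{n-p}K(\mathbb{Z},2j)$, the component of a monoid completion, so $H_*(\mathcal{C}_p(\mathbb{P}^n),\mathbb{Q})$ is a polynomial algebra on even generators and vanishes in odd degrees.

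So the plan is to show that every rational homology class of $C_{p,d}(\mathbb{P}^n)$ becomes algebraic (in $q$-cycles) after pushing forward to $C_{p,d'}(\mathbb{P}^n)$ for large $d'$, and that the kernel of $cl_\mathbb{Q}$ dies in the limit as well.

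For surjectivity, I would use the even-dimensional generators. The rational homology of $\mathcal{C}_p(\mathbb{P}^n)$ is generated, via the Pontryagin product induced by cycle addition $C_{p,a}\times C_{p,b}\to C_{p,a+b}$, by classes coming from $\mathrm{SP}^\infty$-type pieces. Concretely, the join/suspension maps $\Sigma^{p}:\mathrm{SP}^d(\mathbb{P}^{n-p})=C_{0,d}(\mathbb{P}^{n-p})\to C_{p,d}(\mathbb{P}^n)$ are algebraic morphisms. Lawson's suspension theorem says these induce homotopy equivalences in the limit, so they give a rational homology isomorphism $H_*(\mathrm{SP}^\infty\mathbb{P}^{n-p},\mathbb{Q})\to H_*(\mathcal{C}_p(\mathbb{P}^n),\mathbb{Q})$. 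By Lemma \ref{lemma3.6}, $cl_\mathbb{Q}$ is an isomorphism for each $\mathrm{SP}^d(\mathbb{P}^{n-p})$. Since Lawson homology is covariantly functorial for proper morphisms and $cl$ is natural, the commutative square
$$L_qH_k(\mathrm{SP}^d\mathbb{P}^{n-p})_\mathbb{Q}\to L_qH_k(C_{p,d}(\mathbb{P}^n))_\mathbb{Q}\to H_k(C_{p,d}(\mathbb{P}^n),\mathbb{Q})$$
shows that every class in the image of $\Sigma^p_*$ is algebraic. Passing to the limit gives surjectivity of $cl_\mathbb{Q}$ on $\mathcal{C}_p$. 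Direct limits commute with $\otimes\mathbb{Q}$ and with the compatible cycle maps, so everything is compatible with the embeddings $i$.

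For injectivity, I expect the main obstacle: a priori $L_qH_k(C_{p,d})_\mathbb{Q}$ may have extra classes. I would try to build a left inverse at the level of Lawson homology in the limit. Chow's moving lemma / Lawson's proof of the suspension theorem gives an algebraic (continuous on cycle spaces, degree-preserving in the limit) homotopy retracting $C_{p,d}(\mathbb{P}^n)$ into the image of suspensions, after adding $l_0$'s, i.e. inside $C_{p,d'}$. Because this deformation is through algebraic families parametrized by $\mathbb{P}^1$ (or $\mathbb{A}^1$), it induces maps on $\mathcal{Z}_q(-)$. Hence $\Sigma^p_*$ becomes surjective on $\lim_d L_qH_k(-)_\mathbb{Q}$, and every Lawson class in the limit comes from $\mathrm{SP}^\infty\mathbb{P}^{n-p}$, where $cl_\mathbb{Q}$ is injective. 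Combined with the injectivity of $\Sigma^p_*$ on rational homology in the limit, a diagram chase gives injectivity. The delicate point is verifying that Lawson's deformation is algebraic enough to act on $q$-cycle spaces of the Chow varieties themselves. If it is not, I would instead fall back on a weight/degree argument using the $\mathbb{C}^*$-action and the Pontryagin ring structure.
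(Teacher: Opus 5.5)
Your surjectivity argument is correct, and your injectivity diagram chase is the one the paper uses. Everything reduces to surjectivity of $\Sigma_*$ on $\lim_d L_qH_k(-)_{\mathbb{Q}}$, i.e.\ Lemma~\ref{lemma3.9} applied $p$ times. The gap is in how you propose to prove that surjectivity.

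Lawson's argument gives no algebraic homotopy from the stabilization $c\mapsto c+m\,l_0$ into the image of $\Sigma$. The flow $\phi_t$ of Proposition~\ref{Prop3.2} carries cycles into $\Sigma C_{p,d}(\mathbb{P}^n)$ only on the open set $T_{p+1,d}(\mathbb{P}^{n+1})$. A component lying in the hyperplane $\mathbb{P}^n$ is fixed by $\phi_t$, and it is still there after adding copies of $l_0$. The merely continuous deformation supplied by Lawson's $2d$-connectivity does not help, since it does not act on $\mathcal{Z}_q$ of the Chow variety.

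Getting into $T$ is done by the divisor construction, which works only family by family and only for multiples. For $f\colon S^{k-2q}\to\mathcal{Z}_q(C_{p+1,d}(\mathbb{P}^{n+1}))$ and $e$ with $2\binom{p+e+1}{e}>k+1$, one chooses $D$ off the bad set controlled by Proposition~\ref{Prop3.4}. The morphisms $F_{tD}=(p_1)_*\circ\Psi_{tD}$ then deform $F_{0D}=\Phi_e\colon c\mapsto e\cdot c$ (not the stabilization) to a map whose supports lie in $T_{p+1,de}(\mathbb{P}^{n+1})$ (Proposition~\ref{Prop3.5}). Only after that does $\phi_t$, extended to $t=\infty$, land in $\Sigma C_{p,de}(\mathbb{P}^n)$. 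The output is $(\Phi_e)_*[f]\in\mathrm{im}\,\Sigma_*$, not $[f]\in\mathrm{im}\,\Sigma_*$.

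So a second step is indispensable, and your proposal has nothing for it. The paper closes it with $[f]=(e+1)[f]-e[f]$, using $[e\cdot f]=e[f]$. Be careful if you adopt that. Here $e\cdot f=(\Phi_e)_*\circ f$ is pushforward along a nonlinear morphism of the Chow variety, not $e$ times $f$ in the group $\mathcal{Z}_q(C_{p+1,d}(\mathbb{P}^{n+1}))$. The two coincide in Lawson's original setting, where the families live in $\mathcal{Z}_{p+1}(\mathbb{P}^{n+1})$ and the group law is the monoid sum. They do not coincide on the Chow variety. For $C_{1,d}(\mathbb{P}^2)=\mathbb{P}^{N_d}$, the map $\Phi_e\colon [F]\mapsto[F^e]$ is given by polynomials of degree $e$, so on $L_qH_4\cong H_4$ it is $e^2$ times the stabilization map.

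What you actually need is that $(\Phi_e)_*$ is injective on $L_qH_k(\mathcal{C}_{p+1}(\mathbb{P}^{n+1}))_{\mathbb{Q}}$. Combine this with invertibility of $(\Phi_e)_*$ on the $\mathcal{C}_p$ side, which holds by induction since there $cl_{\mathbb{Q}}$ is an isomorphism and $\Phi_e$ is a rational homology equivalence. Together with $\Phi_e\circ\Sigma=\Sigma\circ\Phi_e$, this gives $[f]\in\mathrm{im}\,\Sigma_*$. Neither your retraction claim nor the fallback ``weight/Pontryagin'' argument supplies this injectivity.
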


This verifies Conjecture \ref{conj3} for the Friedlander completion of Chow varieties.
Theorem \ref{Thm2.1} establishes that every topological cycle
in rational coefficients on $C_{p,d}(\mathbb{P}^n)$ is homologous to
an algebraic cycle under the direct limit of embeddings defined in \eqref{eqn2.1}.
In particular,
$$
\lim_{d\to\infty}\pi_k(\mathcal{Z}_q(C_{p,d}(\mathbb{P}^n)))\otimes \mathbb{Q}=0
$$
and
$$\lim_{d\to\infty}H_k(\mathcal{Z}_q(C_{p,d}(\mathbb{P}^n)))\otimes \mathbb{Q}=0$$
for all odd integers $k$.

The following result generalizes the main theorem of \cite{Hu-2015},
conjectured by Lawson, establishing
that the inclusion $i: C_{p,d}(\mathbb{P}^n) \to C_{p,d+1}(\mathbb{P}^n)$ is $2d$-connected.

\begin{theorem}\label{Thm2.2}
For all $0\leq p\leq n$,
the embedding  $i:C_{p,d}(\mathbb{P}^n)\to C_{p,d+1}(\mathbb{P}^n)$ induces isomorphisms
$$
i_*:L_qH_k(C_{p,d}(\mathbb{P}^n))_{\mathbb{Q}}\cong L_qH_k(C_{p,d+1}(\mathbb{P}^n))_{\mathbb{Q}}
$$
for $0\leq 2q\leq k\leq 2d$.
\end{theorem}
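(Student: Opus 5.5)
We prove Theorem \ref{Thm2.2} by reducing the comparison of Lawson homology to a comparison of singular homology, using the stratification of $C_{p,d+1}(\mathbb{P}^n)$ given by the image of $i$ and its complement. Write $U_{d+1}:=C_{p,d+1}(\mathbb{P}^n)\setminus i(C_{p,d}(\mathbb{P}^n))$, the open subset of cycles that do not contain $l_0$ with positive multiplicity.

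Lawson homology of $q$-cycles has a localization long exact sequence for a closed subvariety $Y\subset X$ with complement $U$ (Lima-Filho). Here $L_qH_*(U)$ is defined via the quotient $\mathcal{Z}_q(X)/\mathcal{Z}_q(Y)$. Applying it to $Y=i(C_{p,d}(\mathbb{P}^n))$ and tensoring with $\mathbb{Q}$ gives
$$\cdots\to L_qH_k(C_{p,d}(\mathbb{P}^n))_{\mathbb{Q}}\xrightarrow{i_*} L_qH_k(C_{p,d+1}(\mathbb{P}^n))_{\mathbb{Q}}\to L_qH_k(U_{d+1})_{\mathbb{Q}}\to L_qH_{k-1}(C_{p,d}(\mathbb{P}^n))_{\mathbb{Q}}\to\cdots.$$
It therefore suffices to show that $L_qH_k(U_{d+1})_{\mathbb{Q}}=0$ for $k\le 2d+1$ and all $q$ with $2q\le k$. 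The group with $k=2d+1$ is needed to obtain surjectivity at $k=2d$.

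To prove this vanishing, use the structure of $U_{d+1}$ that underlies the proof in \cite{Hu-2015} that $i$ is $2d$-connected. In that proof, the Borel--Moore homology $H^{BM}_k(U_{d+1},\mathbb{Q})$ vanishes for $k\le 2d+1$. This is the topological input, and it follows from the singular homology isomorphisms of \cite{Hu-2015} together with the long exact sequence of the pair.

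We then want the same vanishing for Lawson homology. The plan is to stratify $U_{d+1}$ by the constructible decomposition used by Lawson and Hu. Cycles are grouped by how they meet a fixed linear subspace complementary to $l_0$, and the strata are built from $\mathbb{C}^*$-actions coming from algebraic suspension and linear projection. On each piece, a $\mathbb{C}$- or $\mathbb{C}^*$-bundle structure forces both Borel--Moore homology and Lawson homology to vanish in low degrees. For Lawson homology this uses the homotopy invariance $L_qH_k(E)\cong L_{q-r}H_{k-2r}(B)$ for affine bundles of rank $r$, and for $\mathbb{C}^*$-bundles the corresponding localization. An induction over the strata with localization sequences then yields $L_qH_k(U_{d+1})_{\mathbb{Q}}=0$ for $k\le 2d+1$.

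An alternative route to the same conclusion uses the cycle class map. Lemma \ref{lemma3.6} gives that the cycle class map is an isomorphism for symmetric products. Via Lawson's complex suspension, the low-degree rational homology of $C_{p,d}(\mathbb{P}^n)$ can be modelled on $\mathrm{SP}^d$-type pieces, and a five-lemma comparison along the localization sequences transfers the isomorphism.

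The main obstacle is the vanishing $L_qH_k(U_{d+1})_{\mathbb{Q}}=0$. Lawson homology is not determined by topology, so the topological vanishing from \cite{Hu-2015} cannot simply be quoted. The stratification must be chosen algebraically, with bundle structures compatible with the Lawson-homology projective and affine bundle formulas. The first thing to try is to make the $\mathbb{C}^*$-retraction of $U_{d+1}$ onto cycles of strictly smaller degree in $l_0$-direction algebraic, and then apply homotopy invariance of $\mathcal{Z}_q$ under algebraic families.
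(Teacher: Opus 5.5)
\textbf{There is a genuine gap.} Your localization reduction is correct. The sequence for $i(C_{p,d}(\mathbb{P}^n))\subset C_{p,d+1}(\mathbb{P}^n)$ shows that $i_*$ is a rational isomorphism for $2q\le k\le 2d$ once $L_qH_k(U_{d+1})_{\mathbb{Q}}=0$ for $2q\le k\le 2d+1$. (The case $k=2d+1$ gives injectivity at $k=2d$, not surjectivity.) The rational Borel--Moore vanishing also does follow from \cite{Hu-2015}. But this vanishing is essentially equivalent to the theorem itself, the proposal never establishes it, and both mechanisms you suggest fail.

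\emph{Bundle structures do not force vanishing.} An affine bundle $E\to B$ of rank $r$ only shifts indices, $L_qH_k(E)\cong L_{q-r}H_{k-2r}(B)$, and likewise for Borel--Moore homology. A $\mathbb{C}^*$-bundle kills nothing either, e.g.\ $L_qH_k(B\times\mathbb{C}^*)_{\mathbb{Q}}\cong L_{q-1}H_{k-2}(B)_{\mathbb{Q}}\oplus L_qH_{k-1}(B)_{\mathbb{Q}}$. What would give vanishing up to $k=2d+1$ is a decomposition of $U_{d+1}$ into (finite quotients of) affine bundles of rank at least $d+1$, and none is produced.

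\emph{An algebraic $\mathbb{C}^*$-retraction plus ``homotopy invariance'' cannot work either.} $\mathcal{Z}_q(U)=\mathcal{Z}_q(X)/\mathcal{Z}_q(Y)$ is covariant only for proper maps, and the Lawson homology of a non-compact variety is of Borel--Moore type. Already for $p=0$, $U_{d+1}=\mathrm{SP}^{d+1}(\mathbb{P}^n\setminus l_0)$ retracts through an algebraic $\mathbb{A}^1$-family (scaling in the line bundle $\mathbb{P}^n\setminus l_0\to\mathbb{P}^{n-1}$) onto $\mathrm{SP}^{d+1}(\mathbb{P}^{n-1})$. Yet $L_0H_0(U_{d+1})=0$ while $L_0H_0(\mathrm{SP}^{d+1}(\mathbb{P}^{n-1}))=\mathbb{Z}$. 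For $p=0$ the vanishing does hold, because $U_{d+1}$ is an $S_{d+1}$-quotient of a rank-$(d+1)$ vector bundle over $(\mathbb{P}^{n-1})^{d+1}$; for $p\ge 1$ you have no analogue.

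\emph{Your ``alternative route'' is circular.} A five-lemma comparison with singular homology presupposes that the rational cycle class map is an isomorphism on $C_{p,d}(\mathbb{P}^n)$ or on the strata, which is exactly what is unknown for $p\ge 1$. Lawson's suspension theorem only supplies homotopy equivalences such as $T_{p+1,d}(\mathbb{P}^{n+1})\simeq C_{p,d}(\mathbb{P}^n)$, and these do not transfer Lawson homology.

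The missing idea is to never pass to the open complement. The paper works only with proper algebraic maps between the compact Chow varieties, paying with a multiplication by $e$ that is invertible over $\mathbb{Q}$.
\begin{itemize}
\item Lawson's construction $F_{tD}=(p_1)_*\circ\Psi_{tD}$, $t\in\mathbb{C}$, with the codimension bound of Proposition \ref{Prop3.4}, deforms $e\cdot f$ for any family $f:S^{k-2q}\to\mathcal{Z}_q(C_{p+1,d}(\mathbb{P}^{n+1}))$ into a family supported in $T_{p+1,de}(\mathbb{P}^{n+1})$ (Proposition \ref{Prop3.5}).
\item The algebraic flow $\phi_t\to\phi_\infty$ of Proposition \ref{Prop3.2} then pushes that family into $\Sigma C_{p,de}(\mathbb{P}^n)$.
\item Starting from symmetric products (Lemmas \ref{lemma3.6} and \ref{lemma4.1}) and inducting on $p$, this gives Theorem \ref{Thm2.1}, and then Proposition \ref{Prop4.3} via diagram \eqref{eqn4.4}: $c\mapsto e\cdot c$ induces rational isomorphisms for $2q\le k\le 2d$.
\item Hence $L_qH_k(C_{p,d}(\mathbb{P}^n))_{\mathbb{Q}}\cong L_qH_k(\mathcal{C}_p(\mathbb{P}^n))_{\mathbb{Q}}$ in that range. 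The theorem follows from the commuting triangle with $i_*$, and for $p=0$ from Lemma \ref{lemma3.6} and the singular stability of \cite{Hu-2015}.
\end{itemize}
Once the rational cycle class map is known to be an isomorphism on $C_{p,d}(\mathbb{P}^n)$ and $C_{p,d+1}(\mathbb{P}^n)$ in this range, the statement follows immediately by naturality from \cite{Hu-2015}. No analysis of $U_{d+1}$ is needed at all.
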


Combining results from the above theorems and the homotopy type of $\mathcal{C}_p(\mathbb{P}^n)$,
we determine the structure of the $q$-th Lawson homology group $L_qH_k(C_{p,d}(\mathbb{P}^n))$
for $C_{p,d}(\mathbb{P}^n)$:
$$
L_qH_k(C_{p,d}(\mathbb{P}^n))_{\mathbb{Q}}\cong H_{k}(\mathcal{C}_p(\mathbb{P}^n),\mathbb{Q})
$$
for all  $0\leq q\leq k\leq 2d$, where
$$
H_k(\mathcal{C}_p(\mathbb{P}^n),\mathbb{Q})\cong H_k\bigg(\prod_{i=1}^{n-p} K(\mathbb{Z},2i),\mathbb{Q}\bigg),
$$
and $K(\mathbb{Z},m)$ is the Eilenberg--MacLane space
whose $m$-th homotopy group is $\mathbb{Z}$,
and all other homotopy groups are trivial.
The $\mathbb{Q}$-dimension of $H_k\left(\prod_{i=1}^{n-p} K(\mathbb{Z}, 2i), \mathbb{Q}\right)$
is computable via the K\"{u}nneth formula.

The following result generalizes the Lawson suspension theorem for homotopy groups to Lawson homology groups.

\begin{theorem}\label{Thm2.3}
For all $0\leq p\leq n$,
the suspension map $\Sigma:C_{p,d}(\mathbb{P}^n)\to C_{p+1,d}(\mathbb{P}^{n+1})$ induces
isomorphisms
$$\Sigma_*:L_qH_k(C_{p,d}(\mathbb{P}^n))_{\mathbb{Q}}\cong L_qH_k(C_{p+1,d}(\mathbb{P}^{n+1}))_{\mathbb{Q}}$$ for $0\leq 2q\leq
k\leq 2d$.
\end{theorem}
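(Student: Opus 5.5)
We deduce Theorem \ref{Thm2.3} from Theorem \ref{Thm2.2}, Theorem \ref{Thm2.1}, and Lawson's complex suspension theorem, via a commutative square. The algebraic suspension $\Sigma$ (join with a point $v\in\mathbb{P}^{n+1}\setminus\mathbb{P}^n$) is compatible with the embeddings \eqref{eqn2.1}. If we choose the base $(p+1)$-plane in $\mathbb{P}^{n+1}$ to be $\Sigma l_0$, then $\Sigma(c+l_0)=\Sigma c+\Sigma l_0$. Hence the diagram
$$
\xymatrix{
C_{p,d}(\mathbb{P}^n)\ar[r]^{i}\ar[d]_{\Sigma} & C_{p,d+1}(\mathbb{P}^n)\ar[d]^{\Sigma}\\
C_{p+1,d}(\mathbb{P}^{n+1})\ar[r]^{i} & C_{p+1,d+1}(\mathbb{P}^{n+1})
}
$$
commutes. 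Since $\Sigma$ and $i$ are algebraic (or at least continuous algebraic) maps, proper pushforward on $\mathcal{Z}_q$ gives the corresponding commutative square on $L_qH_k(-)_{\mathbb{Q}}$. Passing to the limit over $d$ yields a map $\Sigma_*:L_qH_k(\mathcal{C}_p(\mathbb{P}^n))_{\mathbb{Q}}\to L_qH_k(\mathcal{C}_{p+1}(\mathbb{P}^{n+1}))_{\mathbb{Q}}$.

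The key steps, in order, are as follows.

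First, by Theorem \ref{Thm2.2}, for $0\le 2q\le k\le 2d$ the horizontal maps in the $L_qH_k(-)_{\mathbb{Q}}$ square are isomorphisms for every $d'\ge d$. Therefore $L_qH_k(C_{p,d}(\mathbb{P}^n))_{\mathbb{Q}}\to L_qH_k(\mathcal{C}_p(\mathbb{P}^n))_{\mathbb{Q}}$ is an isomorphism in this range, and likewise for $C_{p+1,d}(\mathbb{P}^{n+1})$. It thus suffices to show that $\Sigma_*$ is an isomorphism on the completions.

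Second, Theorem \ref{Thm2.1} identifies both completions' rational Lawson homology with rational singular homology. We also need naturality of the cycle class map, $cl\circ\Sigma_*=\Sigma_*\circ cl$; I expect this to follow from the functoriality of $cl$ under proper maps, provided that the isomorphism in Theorem \ref{Thm2.1} is induced by $cl$. This reduces the problem to showing that $\Sigma:\mathcal{C}_p(\mathbb{P}^n)\to\mathcal{C}_{p+1}(\mathbb{P}^{n+1})$ induces an isomorphism on $H_*(-,\mathbb{Q})$.

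Third, Lawson's theorem gives that $\Sigma:C_{p,d}(\mathbb{P}^n)\to C_{p+1,d}(\mathbb{P}^{n+1})$ is $2d$-connected, so the limit map on completions is a weak homotopy equivalence. Both completions are homotopy equivalent to $\prod_{i=1}^{n-p}K(\mathbb{Z},2i)$, and the rational homology isomorphism follows.

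The main obstacle is the second step: checking that the isomorphism of Theorem \ref{Thm2.1} is natural with respect to $\Sigma$, i.e.\ that it is the cycle class map and that $cl$ commutes with pushforward along $\Sigma$ on each $C_{p,d}$. If Theorem \ref{Thm2.1} is instead proved by an abstract dimension count, I would fall back on a different argument. Since $\Sigma_*$ on singular homology is an isomorphism in the range $k\le 2d$ by Lawson, and $cl_{\mathbb{Q}}$ is natural and an isomorphism on both sides in this range, the five-lemma-free square argument still forces $\Sigma_*$ on $L_qH_k(-)_{\mathbb{Q}}$ to be an isomorphism. This works provided $cl_{\mathbb{Q}}$ is known to be bijective at level $d$, which follows from the first two steps.
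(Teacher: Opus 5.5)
Your proof is correct, but it is organized differently from the paper's. The paper gets Theorem~\ref{Thm2.3} as a by-product of the induction in Proposition~\ref{Prop4.3}. In the square \eqref{eqn4.4}, the horizontal maps are the multiplication-by-$e$ maps $(\Phi_{p,d,n,e})_*$ and $(F_{0D})_*=(\Phi_{p+1,d,n+1,e})_*$, and the vertical maps are $\Sigma_*$ at levels $d$ and $de$. Three of the four maps are isomorphisms: the top one by the inductive hypothesis (ultimately Lemma~\ref{lemma4.1}), the bottom one by Proposition~\ref{Prop4.3}, and the right one for large $e$, asserted from Theorem~\ref{Thm3.1}. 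Hence so is the left one.

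You instead take the stabilization embeddings $i$ as horizontal maps, with base plane $\Sigma l_0$ upstairs so that $\Sigma\circ i=i\circ\Sigma$, and pass all the way to the completions. Then the horizontal map $L_qH_k(C_{p,d}(\mathbb{P}^n))_{\mathbb{Q}}\to L_qH_k(\mathcal{C}_p(\mathbb{P}^n))_{\mathbb{Q}}$ and its analogue for $(p+1,n+1)$ are isomorphisms for $k\le 2d$ by Theorem~\ref{Thm2.2}, iterated over all $d'\ge d$. The right vertical map is exactly the completion-level suspension, which is a rational isomorphism by Theorem~\ref{Thm3.1}.

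\emph{What each route buys.} Your version makes Theorem~\ref{Thm2.3} a purely formal consequence of Theorems~\ref{Thm2.2} and~\ref{Thm3.1}, with no further use of the divisor construction $F_{tD}$. It also makes explicit the passage from the completion back to a finite level. The paper's step ``$\Sigma_*$ at level $de$ is an isomorphism for sufficiently large $e$ by Theorem~\ref{Thm3.1}'' uses this passage without spelling it out, and it needs precisely a stability statement of the type of Theorem~\ref{Thm2.2}. The price is that your proof is logically downstream of Theorem~\ref{Thm2.2} for both $(p,n)$ and $(p+1,n+1)$. The paper, by contrast, extracts Theorems~\ref{Thm2.2} and~\ref{Thm2.3} from the same induction.

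What you flag as the main obstacle is not one. Theorem~\ref{Thm3.1} already states that $\Sigma_*$ on the completions is an isomorphism after tensoring with $\mathbb{Q}$. Its proof via \eqref{eqn3.2} is exactly your Steps 2--3, and Theorem~\ref{Thm2.1} is established there by showing that $cl_{\mathbb{Q}}$ itself is an isomorphism. So you can cite Theorem~\ref{Thm3.1} directly and drop the fallback paragraph. That paragraph would not hedge anything anyway, since bijectivity of $cl_{\mathbb{Q}}$ at level $d$ (Corollary~\ref{cor2.4}) already presupposes that the isomorphism is the cycle class map.
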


The suspension map $\Sigma$ is defined in the next section.
We prove these theorems
by extending Lawson's approach to studying the homotopy type of Chow monoids \cite{Lawson1}.
Unlike Lawson's method for homotopy groups, our proofs involve significant modifications,
as Lawson homology groups are not homotopy invariants.

As applications of Theorem \ref{Thm2.1} and Theorem \ref{Thm2.2},
along with the Lawson suspension theorem \cite[Thm.~3]{Lawson1} and \cite[Thm.~1]{Hu-2015},
we compute the Lawson homology groups of $C_{p,d}(\mathbb{P}^n)$ up to degree $d$,
partially verifying Conjecture \ref{conj3}.

 \begin{corollary}\label{cor2.4}
For all $0\leq p\leq n$, the cycle class map
$$
cl:L_qH_k(C_{p,d}(\mathbb{P}^n))_{\mathbb{Q}}\cong H_{k}(C_{p,d}(\mathbb{P}^n),\mathbb{Q})
$$
is an isomorphism for any $0\leq 2q\leq k\leq 2d$.
\end{corollary}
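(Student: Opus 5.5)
The plan is to use the naturality of the cycle class map with respect to the embeddings $i$ and compare with the stable object $\mathcal{C}_p(\mathbb{P}^n)$. For each $d$ we have a commutative square
$$
\begin{array}{ccc}
L_qH_k(C_{p,d}(\mathbb{P}^n))_{\mathbb{Q}} & \stackrel{i_*}{\longrightarrow} & L_qH_k(C_{p,d+1}(\mathbb{P}^n))_{\mathbb{Q}}\\
\downarrow cl_{\mathbb{Q}} & & \downarrow cl_{\mathbb{Q}}\\
H_k(C_{p,d}(\mathbb{P}^n),\mathbb{Q}) & \stackrel{i_*}{\longrightarrow} & H_k(C_{p,d+1}(\mathbb{P}^n),\mathbb{Q}).
\end{array}
$$
This commutes because $i$ is an algebraic morphism (addition of the fixed cycle $l_0$), and the cycle class map is natural for proper pushforward. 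Passing to the direct limit over $d$ gives a map $\lim_d cl_{\mathbb{Q}}: L_qH_k(\mathcal{C}_p(\mathbb{P}^n))_{\mathbb{Q}}\to H_k(\mathcal{C}_p(\mathbb{P}^n),\mathbb{Q})$. I will check, or extract from its proof, that the isomorphism of Theorem~\ref{Thm2.1} is exactly this limit of cycle class maps. This identification is the main point needing care: Theorem~\ref{Thm2.1} is stated only as an abstract isomorphism. If it is not evidently induced by $cl$, I would argue as follows. Both sides are finite-dimensional in each degree (the right side is computed from $\prod K(\mathbb{Z},2i)$). Surjectivity of the limit $cl_{\mathbb{Q}}$ should come from the construction in Theorem~\ref{Thm2.1}, which realizes topological classes by algebraic families. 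Surjectivity plus equality of finite dimensions then gives bijectivity.

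Next, fix $d$ and $0\le 2q\le k\le 2d$. By Theorem~\ref{Thm2.2}, the top horizontal maps $i_*:L_qH_k(C_{p,d'})_{\mathbb{Q}}\to L_qH_k(C_{p,d'+1})_{\mathbb{Q}}$ are isomorphisms for every $d'\ge d$, since $k\le 2d\le 2d'$. Hence the canonical map $L_qH_k(C_{p,d}(\mathbb{P}^n))_{\mathbb{Q}}\to L_qH_k(\mathcal{C}_p(\mathbb{P}^n))_{\mathbb{Q}}$ is an isomorphism. Similarly, by \cite[Thm.~1]{Hu-2015}, the inclusion $i$ is $2d$-connected, so $H_k(C_{p,d'},\mathbb{Q})\to H_k(C_{p,d'+1},\mathbb{Q})$ is an isomorphism for $k<2d'$ and a surjection for $k=2d'$. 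For the boundary case $k=2d$, I need an isomorphism at the first step $d'=d$. I would handle this by combining the Lawson suspension theorem \cite[Thm.~3]{Lawson1} with the explicit homology computation of \cite{Hu-2015} in that range. These show that $H_{2d}(C_{p,d},\mathbb{Q})$ already has the stable dimension, so the surjection is an isomorphism. In every case, $H_k(C_{p,d}(\mathbb{P}^n),\mathbb{Q})\to H_k(\mathcal{C}_p(\mathbb{P}^n),\mathbb{Q})$ is an isomorphism for $k\le 2d$.

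Finally, in the square formed by $cl_{\mathbb{Q}}$ at level $d$, $cl_{\mathbb{Q}}$ at the limit, and the two stabilization maps, three of the four maps are isomorphisms: both stabilization maps by the previous paragraph, and the limit $cl_{\mathbb{Q}}$ by Theorem~\ref{Thm2.1}. Therefore $cl_{\mathbb{Q}}$ at level $d$ is an isomorphism for $0\le 2q\le k\le 2d$.

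I expect the main obstacle to be twofold. First, one must ensure that the isomorphism in Theorem~\ref{Thm2.1} is realized by the cycle class map. Second, the endpoint $k=2d$ needs the sharp form of the stability statement in \cite{Hu-2015}, not just $2d$-connectivity.
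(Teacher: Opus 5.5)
Your proposal is correct and is essentially the paper's own argument. The paper reads the corollary off the square \eqref{eqn4.5} in the proof of Proposition~\ref{Prop4.3}. There, both stabilization maps to the Friedlander completion are isomorphisms for $k\le 2d$: on the Lawson side via the stabilization underlying Theorem~\ref{Thm2.2}, and on the singular side via \cite{Hu-2015}, including the endpoint $k=2d$. The bottom map is $cl_{\mathbb{Q}}$, which is an isomorphism by Theorem~\ref{Thm2.1}.

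Your first worry is unfounded. The inductive proof of Theorem~\ref{Thm2.1} shows directly that the limit cycle class map $cl_{\mathbb{Q}}$ is itself the isomorphism, so your finite-dimensionality fallback is not needed.
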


The first $2d+1$ homology groups of $C_{p,d}(\mathbb{P}^n)$ is given by the formula
$$H_k(C_{p,d}(\mathbb{P}^{n}))\cong  H_k(K(\mathbb{Z}, 2)\times\cdots\times K(\mathbb{Z},2(n-p)))$$
in \cite[Cor. 5]{Hu-2015} for $0\leq k\leq 2d$.
Consequently,
Corollary \ref{cor2.4} yields the rank of $L_qH_k(C_{p,d}(\mathbb{P}^n))$
for $0 \leq 2q \leq k \leq 2d$ and all $0 \leq p \leq n$.

 \begin{corollary}\label{cor2.5}
For all $0\leq p\leq n$, we have isomorphisms
$$
L_qH_k(C_{p,d}(\mathbb{P}^n))_{\mathbb{Q}}\cong H_k(K(\mathbb{Z}, 2)\times\cdots\times K(\mathbb{Z},2(n-p)),\mathbb{Q})
$$
for any $0\leq 2q\leq k\leq 2d$.

\end{corollary}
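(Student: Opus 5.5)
Corollary \ref{cor2.5} follows by chaining Corollary \ref{cor2.4} with the known computation of the low-degree singular homology of Chow varieties. I would not need any new geometric input.

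Fix $0\leq p\leq n$ and $0\leq 2q\leq k\leq 2d$. Corollary \ref{cor2.4} says that, in this range, the rational cycle class map
$$cl:L_qH_k(C_{p,d}(\mathbb{P}^n))_{\mathbb{Q}}\to H_k(C_{p,d}(\mathbb{P}^n),\mathbb{Q})$$
is an isomorphism. It therefore suffices to identify $H_k(C_{p,d}(\mathbb{P}^n),\mathbb{Q})$ with $H_k(\prod_{i=1}^{n-p}K(\mathbb{Z},2i),\mathbb{Q})$ for $0\leq k\leq 2d$.

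For that identification I would use \cite[Cor.~5]{Hu-2015}, quoted just above. It gives the integral isomorphism
$$H_k(C_{p,d}(\mathbb{P}^n))\cong H_k\Big(\prod_{i=1}^{n-p}K(\mathbb{Z},2i)\Big), \qquad 0\leq k\leq 2d.$$
Tensoring with $\mathbb{Q}$ and applying the universal coefficient theorem gives the rational statement; over a field, $H_k(-,\mathbb{Q})=H_k(-,\mathbb{Z})\otimes\mathbb{Q}$, since $\mathbb{Q}$ is flat. One could instead argue directly. The inclusion $C_{p,d}(\mathbb{P}^n)\to\mathcal{C}_p(\mathbb{P}^n)$ is $2d$-connected by iterating \cite[Thm.~1]{Hu-2015}, so it induces isomorphisms on $H_k$ for $k<2d$ and a surjection at $k=2d$. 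Lawson's theorem \cite{Lawson1} identifies the homotopy type of $\mathcal{C}_p(\mathbb{P}^n)$ with $\prod_{i=1}^{n-p}K(\mathbb{Z},2i)$.

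The only point needing care is the top degree $k=2d$, where connectivity alone gives just a surjection. Two routes settle it. The first is to rely on \cite[Cor.~5]{Hu-2015}, which already covers $k=2d$. The second is to argue through Lawson homology: Theorem \ref{Thm2.2} gives isomorphisms $L_qH_k(C_{p,d})_{\mathbb{Q}}\cong L_qH_k(C_{p,d+1})_{\mathbb{Q}}$, and these can be iterated. This works because the range $k\leq 2d$ is contained in $k\leq 2d'$ for every $d'\geq d$. The iteration shows that $L_qH_k(C_{p,d}(\mathbb{P}^n))_{\mathbb{Q}}$ maps isomorphically onto the direct limit $L_qH_k(\mathcal{C}_p(\mathbb{P}^n))_{\mathbb{Q}}$, since direct limits commute with $\otimes\mathbb{Q}$. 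Theorem \ref{Thm2.1} then identifies this limit with $H_k(\mathcal{C}_p(\mathbb{P}^n),\mathbb{Q})\cong H_k(\prod_{i=1}^{n-p}K(\mathbb{Z},2i),\mathbb{Q})$. This second route is self-contained within the paper and handles $k=2d$ uniformly.
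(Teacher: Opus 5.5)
Your proposal is correct and is essentially the paper's own argument: the paper obtains Corollary \ref{cor2.5} by combining the rational cycle class isomorphism of Corollary \ref{cor2.4} with the integral computation $H_k(C_{p,d}(\mathbb{P}^n))\cong H_k(\prod_{i=1}^{n-p}K(\mathbb{Z},2i))$ for $0\leq k\leq 2d$ from \cite[Cor.~5]{Hu-2015}, passing to $\mathbb{Q}$ just as you do. Your alternative route also appears in the paper, in the remark right after Theorem \ref{Thm2.2}: it stabilizes via Theorem \ref{Thm2.2}, applies Theorem \ref{Thm2.1}, and then uses Lawson's homotopy type of $\mathcal{C}_p(\mathbb{P}^n)$.
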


\begin{remark}%\label{rmk2.6}
%If we have the following isomorphisms
%\begin{equation*}
%cl:L_qH_k(\mathrm{SP}^d(\mathbb{P}^n))\cong H_k(\mathrm{SP}^d(\mathbb{P}^n),\mathbb{Z}),
%\end{equation*}
%for all $q\geq 0$,i.e,
If the Conjecture \ref{conj2} holds, or
its special case that
$$
cl:L_qH_k(\mathrm{SP}^d(\mathbb{P}^n))\stackrel{\cong}{\longrightarrow} H_{k}(\mathrm{SP}^d(\mathbb{P}^n),\mathbb{Z}),\  0\leq 2q\leq k\leq 2d
$$
holds,
then Corollaries \ref{cor2.4} and \ref{cor2.5} hold in the same range in integer coefficients.
\end{remark}

\iffalse
For $0$-cycles,
the Dold-Thom theorem claims that $L_0H_k(C_{p,d}(\mathbb{P}^n))\cong H_k(C_{p,d}(\mathbb{P}^n))$ for all $0\leq p\leq n$, $d>0$ and $k\geq 0$.

For $1$-cycles, it has been shown in \cite{Hu-2021} that $L_1H_k(C_{p,d}(\mathbb{P}^n))\cong H_k(C_{p,d}(\mathbb{P}^n))$ for all $0\leq p\leq n$, $d>0$ and $k\geq 2$.
\fi

\section{The technique of Suspension on Lawson homology groups}

We briefly review Lawson's approach to proving the Complex Suspension Theorem.
For a detailed exposition of the Suspension Theorem
over the complex numbers or any algebraically closed field,
along with background material, see \cite{Lawson1, Friedlander1, Lawson2}.
Building on Lawson's work on the homotopy type of algebraic cycle spaces,
we compute the Lawson homology groups of Chow varieties and their Friedlander completion.

Fix a hyperplane $\mathbb{P}^n \subset \mathbb{P}^{n+1}$
and a point $\mathbb{P}^0 \in \mathbb{P}^{n+1}- \mathbb{P}^n$.
For any closed algebraic subset $V \subset \mathbb{P}^n$,
the algebraic suspension of $V$ with vertex $\mathbb{P}^0$ (i.e., the cone over $V$) is defined as
$$
\Sigma V := \cup \{ l \mid l \text{ is a projective line through } \mathbb{P}^0 \text{ intersecting } V \}.
$$
The suspension map $\Sigma$ induces a morphism
$\Sigma: C_{p,d}(\mathbb{P}^n) \to C_{p+1,d}(\mathbb{P}^{n+1})$
that commutes with the embedding $i$ in Equation \eqref{eqn2.1}.
Consequently, it induces a map
$\Sigma: \mathcal{C}_p(\mathbb{P}^n) \to \mathcal{C}_{p+1}(\mathbb{P}^{n+1})$.
Furthermore, $\Sigma$ induces homomorphisms on Lawson homology groups:
$$
\Sigma_*: L_qH_k(C_{p,d}(\mathbb{P}^n)) \to L_qH_k(C_{p+1,d}(\mathbb{P}^{n+1}))
$$
and
$$
\Sigma_*: L_qH_k(\mathcal{C}_p(\mathbb{P}^n)) \to L_qH_k(\mathcal{C}_{p+1}(\mathbb{P}^{n+1}))
$$
for all $k \geq 2q \geq 0$.

\begin{theorem}\label{Thm3.1}
For all $k\geq 2q\geq 0$,
the induced homomorphisms
$$
\Sigma_*:L_qH_k(\mathcal{C}_{p}(\mathbb{P}^{n})) \to L_qH_k(\mathcal{C}_{p+1}(\mathbb{P}^{n+1}))
$$
are surjective, and they are isomorphic after tensoring with rational coefficients.
\end{theorem}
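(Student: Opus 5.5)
Following Lawson's proof of the Complex Suspension Theorem, I would compare $\mathcal{C}_p(\mathbb{P}^n)$ with the subspace of the target made of cycles meeting $\mathbb{P}^n$ properly. Let $T_{p+1,d}(\mathbb{P}^{n+1})\subset C_{p+1,d}(\mathbb{P}^{n+1})$ be the Zariski open subset of effective $(p+1)$-cycles of degree $d$ none of whose components lie in $\mathbb{P}^n$. Lawson's holomorphic taming flow $\phi_t$, induced by the $\mathbb{C}^*$-action $[x:x_{n+1}]\mapsto[x:t x_{n+1}]$, deforms $T_{p+1,d}$ algebraically onto $\Sigma C_{p,d}(\mathbb{P}^n)$: for $c\in T_{p+1,d}$ the limit $\lim_{t\to\infty}\phi_t(c)=\Sigma(c\cdot\mathbb{P}^n)$ exists. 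For $c\in T_{p+1,d}$ the projection $\mathbb{P}^{n+1}\setminus \mathbb{P}^0\to\mathbb{P}^n$ is defined generically on each component of $c$. The first step is to lift this deformation to cycle spaces: a family $\{\phi_t\}$ of morphisms parametrized by $\mathbb{P}^1$ induces, via the graph construction, an algebraic homotopy on $\mathcal{Z}_q(T_{p+1,d})$. Together with the intersection map $c\mapsto c\cdot\mathbb{P}^n$, which is a morphism on $T$ (continuous on Chow varieties by Lawson), this shows that $\Sigma_*$ induces an isomorphism $L_qH_k(C_{p,d}(\mathbb{P}^n))\cong L_qH_k(T_{p+1,d})$. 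The point is that Lawson homology is invariant under algebraic (not merely topological) homotopies, and homotopy invariance of $\mathcal{Z}_q$ under $\mathbb{A}^1$- or $\mathbb{P}^1$-families is standard (Friedlander).

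Second, I would control the complement $B_d:=C_{p+1,d}(\mathbb{P}^{n+1})\setminus T_{p+1,d}$, the closed algebraic subset of cycles having a component in $\mathbb{P}^n$. The localization long exact sequence for Lawson homology of the closed pair $B_d\subset C_{p+1,d}$ gives
$$\cdots\to L_qH_k(B_d)\to L_qH_k(C_{p+1,d})\to L_qH_k(T_{p+1,d})\to L_qH_{k-1}(B_d)\to\cdots,$$
where for the open set one uses Borel--Moore Lawson homology. Note that $B_d$ is the image of $\coprod_{e\ge1}C_{p+1,e}(\mathbb{P}^n)\times C_{p+1,d-e}(\mathbb{P}^{n+1})$. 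Lawson's key observation is that, in the limit $d\to\infty$ along $c\mapsto c+l_0$ with $l_0\not\subset\mathbb{P}^n$, the contribution of $B_d$ becomes "infinitely codimensional". Concretely, the plan is to show that $\varinjlim_d L_qH_*(B_d)\to\varinjlim_d L_qH_*(C_{p+1,d})$ is zero rationally. I would do this by writing $B_d$ as an image of products and then using the monoid structure: adding a fixed cycle in $\mathbb{P}^n$ is homotopic, through the connected Chow variety, to adding $l_0$. This produces the surjectivity and rational injectivity statements after passing to the colimit.

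The main obstacle is this second step. Lawson handles the bad set topologically by a codimension/transversality argument and a "pulling" deformation, which pushes cycles off $\mathbb{P}^n$ only after adding large multiples of a generic hyperplane. That deformation is continuous but not algebraic, so it does not directly act on $\mathcal{Z}_q$. I would first try to realize it algebraically: the operation $c\mapsto c+ d'\,\Sigma H$ followed by a projective-linear family moving $\Sigma H$ parametrizes a rational curve of cycles. Such a family induces homotopies on $q$-cycle spaces through the action of the Chow monoid on $\mathcal{Z}_q$. I expect this to give only surjectivity integrally, because the kernel can contain torsion coming from comparing $B_d$ with products of symmetric-type Chow varieties. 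So I would tensor with $\mathbb{Q}$ and use the rational isomorphism (Lemma 3.6 type, i.e. $L_qH_k(\mathrm{SP}^d)_\mathbb{Q}\cong H_k$) together with the known rational homotopy type of $\mathcal{C}_p$ to conclude injectivity rationally.
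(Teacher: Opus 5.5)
Your plan has a genuine gap, and it shows up already for $q=0$, where Lawson homology is just Borel--Moore singular homology.

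\textbf{Step 1 is false.} Once you use the localization sequence for $B_d\subset C_{p+1,d}(\mathbb{P}^{n+1})$, the group $L_qH_k(T_{p+1,d})$ must mean $\pi_{k-2q}\bigl(\mathcal{Z}_q(C_{p+1,d})/\mathcal{Z}_q(B_d)\bigr)$. For this group your claimed isomorphism does not hold.
\begin{itemize}
\item The taming retraction $\phi_\infty:T_{p+1,d}\to\Sigma C_{p,d}(\mathbb{P}^n)$ and the family $\Phi$ are not proper, so they do not act on this quotient.
\item Invariance under algebraic families (Friedlander) is a statement about proper maps of projective varieties, so it does not apply.
\item Concretely, take $p=0$, $n=1$, $d=1$. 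Then $C_{0,1}(\mathbb{P}^1)=\mathbb{P}^1$ but $T_{1,1}(\mathbb{P}^2)\cong\mathbb{P}^2-\{\mathrm{pt}\}$. So $L_0H_0(T)=H^{BM}_0(\mathbb{P}^2-\{\mathrm{pt}\})=0$, while $L_0H_0(\mathbb{P}^1)=\mathbb{Z}$.
\end{itemize}

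\textbf{The target of Step 2 is also false.} For $p<n$ the set $B_d$ is nonempty and connected, so $L_0H_0(B_d)\to L_0H_0(C_{p+1,d})$ is an isomorphism $\mathbb{Z}\to\mathbb{Z}$ for every $d$ and survives in the colimit. Families $c_1+Z$ with $c_1\subset\mathbb{P}^n$ fixed give nonzero classes in higher degrees as well.

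The homotopy equivalence $T_{p+1,d}\simeq C_{p,d}(\mathbb{P}^n)$ is invisible to Borel--Moore-type groups, so the open-set/localization framework is the wrong one here. Your proposed reduction through $C_{p+1,e}(\mathbb{P}^n)\times C_{p+1,d-e}(\mathbb{P}^{n+1})$ would in addition need a K\"{u}nneth-type decomposition of $q$-cycles on products, which Lawson homology does not have.

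\textbf{What the paper does instead.} The missing idea is to avoid Lawson homology of $T$ altogether and deform \emph{representing maps} $f:S^{k-2q}\to\mathcal{Z}_q(C_{p+1,d}(\mathbb{P}^{n+1}))$.
\begin{itemize}
\item Contrary to your worry, Lawson's pulling is algebraic in the parameter. It is the family of morphisms $F_{tD}=(p_1)_*\circ\Psi_{tD}$, $t\in\mathbb{C}$, with $F_{0D}=e\cdot\mathrm{id}$, and morphisms act on $\mathcal{Z}_q$ by pushforward.
\item The estimate $\mathrm{codim}_{\mathbb{C}}B_c\geq\binom{p+e+1}{e}$ is used only to choose $D$ off the set $B(f)$. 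That set has real codimension at least $2\binom{p+e+1}{e}-k-1>0$, and avoiding it makes $\widetilde{F}_{tD}\circ f$ a homotopy from $e\cdot f$ to a map all of whose cycles are supported in $T_{p+1,de}$ (Proposition~\ref{Prop3.5}).
\item The taming flow, applied to $q$-cycles supported in $T$ (Proposition~\ref{Prop3.2}), then pushes this map into $\mathcal{Z}_q(\Sigma C_{p,de}(\mathbb{P}^n))$.
\item Doing the same with $e+1$ gives $[f]=[\tilde g_1]-[\tilde g]$, which lies in the image of $\Sigma_*$ in the colimit (Lemma~\ref{lemma3.9}).
\end{itemize}

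Rational injectivity is then purely formal, by induction on $p$:
\begin{itemize}
\item In the square $cl\circ\Sigma^l_*=\Sigma^h_*\circ cl$, the map $\Sigma^h_*$ is an isomorphism by Lawson's suspension theorem.
\item $cl_{\mathbb{Q}}$ is an isomorphism for $p=0$ by Lemma~\ref{lemma3.6}.
\item Hence $\Sigma^l_*\otimes\mathbb{Q}$ is injective, hence bijective by surjectivity, hence $cl_{\mathbb{Q}}$ is an isomorphism at level $p+1$.
\end{itemize}
Your last sentence points toward this, but in your plan it rests on Steps 1 and 2.
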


%This is a version of the Lawson Suspension Theorem on homotopy and homology groups.

We will prove this theorem at the end of this section, as it requires preliminary results.
Consider a holomorphic vector field on $\mathbb{P}^{n+1}$ vanishing at $\mathbb{P}^n$
and its polar point $\mathbb{P}^0$.
Choose homogeneous coordinates $[z_0 : z_1 : \dots : z_{n+1}]$
for $\mathbb{P}^{n+1}$ such that
$\mathbb{P}^n = (z_0 = 0)$ and $\mathbb{P}^0 = [1 : 0 : \dots : 0]$. For $t \in \mathbb{C}^*$,
define
$$
\phi_t([z_0 : z_1 : \dots : z_{n+1}]) = [t z_0 : z_1 : \dots : z_{n+1}].
$$
The map $\phi_t: \mathbb{P}^{n+1} \to \mathbb{P}^{n+1}$ is an automorphism for $t \in \mathbb{C}^*$,
inducing an automorphism
$$
\phi_t: C_{p+1,d}(\mathbb{P}^{n+1}) \to C_{p+1,d}(\mathbb{P}^{n+1}).
$$
Clearly, $\phi_1$ is the identity map.
Moreover, $\phi_t$ preserves the subspace $T_{p+1,d}(\mathbb{P}^{n+1})$
and acts as the identity on $\Sigma C_{p,d}(\mathbb{P}^n)$ for all $t \in \mathbb{C}^*$,
where
$$
T_{p+1,d}(\mathbb{P}^{n+1}):=\big\{c=\sum n_iV_i\in C_{p+1,d}(\mathbb{P}^{n+1})|\dim(V_i\cap\mathbb{P}^n)=p, ~\forall i\big \}
$$
for any non-negative integers $p$ and $d$.
When $d = 0$, $C_{p,d}(\mathbb{P}^n)$ is defined as the empty cycle.

\begin{proposition}\label{Prop3.2}
The set $T_{p+1,d}(\mathbb{P}^{n+1})$ is Zariski open in $C_{p+1,d}(\mathbb{P}^{n+1})$.
Moreover, $T_{p+1,d}(\mathbb{P}^{n+1})$ is homotopy equivalent to $C_{p,d}(\mathbb{P}^{n})$.
For each $c\in T_{p+1,d}(\mathbb{P}^{n+1})$, the limit
$$
\phi_{\infty}(c):=\lim_{t\to \infty}\phi_t(c)\in \Sigma C_{p,d}(\mathbb{P}^{n})
$$
exists. Moreover,
there is an algebraic map
$$
\Phi: T_{p+1,d}(\mathbb{P}^{n+1})\times\C\to T_{p+1,d}(\mathbb{P}^{n+1})
$$
such that $\Phi_{|T_{p+1,d}(\mathbb{P}^{n+1})\times \{1\}}=\phi_1$
and $\Phi_{|T_{p+1,d}(\mathbb{P}^{n+1})\times \{\infty\}}=\phi_{\infty}$,
where $\C=\C^*\cup \{\infty\}$.
This implies that $\phi_1(Z)=Z$ is algebraic equivalent to $\phi_{\infty}(Z)$
if $Z$ is an (effective) cycle on $C_{p+1,d}(\mathbb{P}^{n+1})$
supported in $ T_{p+1,d}(\mathbb{P}^{n+1})$.

Let $\Lambda$ be a compact topological space
and $f: \Lambda \to \mathcal{Z}_q(C_{p+1,d}(\mathbb{P}^{n+1}))$ a continuous map
such that the support of $f(\lambda)$ is contained in $T_{p+1,d}(\mathbb{P}^{n+1})$
for each $\lambda \in \Lambda$.
Then $f$ is homotopic to a map $g: \Lambda \to \mathcal{Z}_q(C_{p+1,d}(\mathbb{P}^{n+1}))$
with the support of $g(\lambda)$ contained in $\Sigma C_{p,d}(\mathbb{P}^n)$
for each $\lambda \in \Lambda$.
\end{proposition}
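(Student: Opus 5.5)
The plan is to follow Lawson's original argument for the Complex Suspension Theorem, adapted to cycles on the Chow variety itself. There are three steps.

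\emph{Step 1: openness and homotopy equivalence.} The condition $\dim(V_i\cap\mathbb{P}^n)=p$ fails exactly when some component $V_i$ lies in $\mathbb{P}^n$. The complement of $T_{p+1,d}(\mathbb{P}^{n+1})$ is therefore
$$
\bigcup_{d'=1}^{d} \mathrm{im}\bigl(C_{p+1,d'}(\mathbb{P}^n)\times C_{p+1,d-d'}(\mathbb{P}^{n+1})\to C_{p+1,d}(\mathbb{P}^{n+1})\bigr).
$$
Each piece is the image of the addition morphism between projective varieties, hence Zariski closed. For the homotopy equivalence I will use Lawson's results from \cite{Lawson1} directly. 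The intersection map $c\mapsto c\cdot\mathbb{P}^n$ is continuous on $T_{p+1,d}$ and retracts it onto $\Sigma C_{p,d}(\mathbb{P}^n)\cong C_{p,d}(\mathbb{P}^n)$. The flow $\phi_t$, as $t\to\infty$, realizes this retraction as a deformation.

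\emph{Step 2: the limit $\phi_\infty$ and the algebraic family $\Phi$.} For an irreducible $V$ meeting $\mathbb{P}^n$ properly, as $t\to\infty$ the points of $V$ are pushed toward the vertex: $[tz_0:z']=[z_0:z'/t]\to\mathbb{P}^0$. The limit cycle is supported on the cone over $V\cap\mathbb{P}^n$, and degree is preserved. By Lawson's lemma the limit is exactly $\Sigma(V\cdot\mathbb{P}^n)$, with multiplicities given by the intersection multiplicities.

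To build $\Phi$, consider the graph closure $\overline{\Gamma}\subset T_{p+1,d}\times\mathbb{P}^1\times T_{p+1,d}$ of $(c,t)\mapsto\phi_t(c)$. Properness of the family of cycles and existence of limits show that $\overline{\Gamma}\to T_{p+1,d}\times\mathbb{P}^1$ is bijective. It is then a continuous algebraic map, and extends by $\phi_\infty$ at $t=\infty$ (after normalization if one wants a genuine morphism).

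Given an effective cycle $Z$ on $C_{p+1,d}(\mathbb{P}^{n+1})$ supported in $T_{p+1,d}$, take $\Phi_*(Z\times\mathbb{P}^1)$, or more precisely the family $t\mapsto\Phi(\cdot,t)_*Z$. This is a family of cycles parametrized by $\mathbb{P}^1$ joining $\phi_1(Z)=Z$ to $\phi_\infty(Z)$, so the two are algebraically equivalent. Properness of $\Phi$ on the support of $Z$ follows because $Z$ has compact support in $T$ and the limits exist.

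\emph{Step 3: the homotopy.} Define
$$
H(\lambda,s)=\Phi(\cdot,\tau(s))_* f(\lambda),
$$
where $\tau:[0,1]\to[1,\infty]\subset\mathbb{P}^1$ is a real path. Continuity of $H$ into $\mathcal{Z}_q(C_{p+1,d}(\mathbb{P}^{n+1}))$ follows from two facts. First, pushforward along a continuous proper family is continuous in the Chow topology. Second, $\bigcup_\lambda \mathrm{supp} f(\lambda)$ is a compact subset $K\subset T_{p+1,d}$, on which $\Phi$ is proper, with $\Phi(K\times[1,\infty])$ compact. Then $g:=H(\cdot,1)$ is supported in $\phi_\infty(T)\subset\Sigma C_{p,d}(\mathbb{P}^n)$.

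The main obstacle I expect is continuity of $\Phi$ at $t=\infty$, uniformly on compacta, together with showing that $\Phi$ is genuinely algebraic rather than merely continuous. I would first try to prove continuity by writing Chow forms explicitly. Under $\phi_t$, the Chow form of $V$ transforms by scaling the $z_0$-variables. After dividing by the top power of $t$, the limit at $t=\infty$ is the lowest-order part in those variables. This part is nonzero precisely because $V\not\subset\mathbb{P}^n$, and it varies polynomially in the coefficients. That gives an algebraic formula for $\Phi$ on $T\times\mathbb{C}$.
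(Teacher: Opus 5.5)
Your overall route is the paper's: Lawson's flow $\phi_t$, a single-valued algebraic extension of $(c,t)\mapsto\phi_t(c)$ over $T_{p+1,d}(\mathbb{P}^{n+1})\times(\mathbb{C}^*\cup\{\infty\})$, and then the homotopy $t\mapsto\tilde\phi_t\circ f$ for $t\in[1,\infty]$. The paper only cites Lawson (\S4, Rem.~4.6) for openness and single-valuedness. Your Chow-form idea actually proves both. Write $R_c$ in the coordinates $y=(y_0,y')$ of points spanning the test planes. Then $R_{\phi_t c}(y)=R_c(y_0/t,y')$ up to a scalar. With $s=1/t$, the map $(c,s)\mapsto[R_c(sy_0,y')]$ is polynomial, and $R_c(0,y')\not\equiv 0$ exactly when no component of $c$ lies in $\mathbb{P}^n$. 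This also re-proves that $T_{p+1,d}(\mathbb{P}^{n+1})$ is Zariski open. Rely on this formula rather than on ``properness and existence of limits'': pointwise limits alone do not make the graph closure single-valued. Also drop $t=0$. There the closure is neither single-valued nor contained in $T$; for instance, for $c$ missing $\mathbb{P}^0$, $\lim_{t\to0}\phi_t(c)$ is the projection of $c$ into $\mathbb{P}^n$. So work over $\mathbb{P}^1\setminus\{0\}$, as the statement's $\mathbb{C}^*\cup\{\infty\}$ indicates; nothing downstream needs $t=0$.

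The genuine gap is in Step 3. You assert that $\bigcup_\lambda\mathrm{supp}\,f(\lambda)$ is a compact subset of $T$, but this does not follow from each $f(\lambda)$ being supported in $T$: components can cancel in the limit, and then $H$ is not continuous. Take $p=0$, $n=1$, $d=2$, $q=0$, so $C_{1,2}(\mathbb{P}^2)$ is the space of conics and $T$ consists of the conics not containing $L=\{z_0=0\}$. Fix a line $M\neq L$ with $M\cap L=\{m\}$, points $a\neq b$ on $L$, and lines $L_\lambda\ni a$, $L'_\lambda\ni b$ tending to $L$ as $\lambda\to0$. Set $f(\lambda)=[L_\lambda+M]-[L'_\lambda+M]$ on $\Lambda=[0,1]$. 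Then $f$ is continuous, $f(0)=0$, and every $f(\lambda)$ is supported in $T$. Yet for every $\lambda>0$, $\tilde\phi_\infty f(\lambda)=[\overline{\mathbb{P}^0a}+\overline{\mathbb{P}^0m}]-[\overline{\mathbb{P}^0b}+\overline{\mathbb{P}^0m}]$ is the same nonzero cycle. Hence your $H$ (and $g$) is discontinuous at $(\lambda,t)=(0,\infty)$. The paper's one-line homotopy has exactly the same hole. To close it you must keep all supports inside one compact subset of $T$. One way is to assume $f=f^+-f^-$ with $f^\pm$ continuous families of effective cycles, each supported in $T$. Then the incidence set $\{(\lambda,x):x\in\mathrm{supp}\,f^\pm(\lambda)\}$ is closed in $\Lambda\times C_{p+1,d}(\mathbb{P}^{n+1})$, so its projection is compact and lies in $T$. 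In the application in Lemma~\ref{lemma3.9}, the same effect should come from choosing $D$ with $F_D(C_{p+1,d}(\mathbb{P}^{n+1}))\subset T_{p+1,de}(\mathbb{P}^{n+1})$; a dimension count with Proposition~\ref{Prop3.4} suggests this is possible for $e\gg0$. Then $t\mapsto(\Phi(\cdot,t)\circ F_D)_*$ is pushforward along an algebraic family of morphisms defined on all of $C_{p+1,d}(\mathbb{P}^{n+1})$, hence continuous.
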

\begin{proof}

The core idea of this proposition originates
from Fulton \cite[\S 5]{Fulton} and \cite[Rem. 4.6]{Lawson1}.
It is shown in \cite[\S 4]{Lawson1} that $T_{p+1,d}(\mathbb{P}^{n+1})$
is Zariski open in $C_{p+1,d}(\mathbb{P}^{n+1})$
and that $\phi_{\infty}(c)$ exists for each $c \in T_{p+1,d}(\mathbb{P}^{n+1})$.

Note that $\mathbb{C} = \mathbb{C}^* \cup {\infty}$.
Consider the morphism
$$
\Phi: C_{p+1,d}(\mathbb{P}^{n+1}) \times \mathbb{C}^* \to C_{p+1,d}(\mathbb{P}^{n+1}), \quad \Phi(c, t) = \phi_t(c).
$$
The Zariski closure of the graph of $\Phi$
in $(C_{p+1,d}(\mathbb{P}^{n+1}) \times \mathbb{C}) \times C_{p+1,d}(\mathbb{P}^{n+1})$,
denoted $\overline{\Gamma_{\Phi}}$,
is single-valued over
$T_{p+1,d}(\mathbb{P}^{n+1}) \times \mathbb{C} \subset C_{p+1,d}(\mathbb{P}^{n+1}) \times \mathbb{C}$
\cite[Rem. 4.6]{Lawson1}.
Consequently,
for an effective algebraic cycle $Z$ on $C_{p+1,d}(\mathbb{P}^{n+1})$
supported in $T_{p+1,d}(\mathbb{P}^{n+1})$,
the restriction $\overline{\Gamma_{\Phi}}|_{T_{p+1,d}(\mathbb{P}^{n+1}) \times \mathbb{C}}$
provides an algebraic equivalence between $\phi_1(Z) = Z$ and $\phi_{\infty}(Z)$,
with support in $\Sigma C_{p,d}(\mathbb{P}^n)$.

Let $f: \Lambda \to \mathcal{Z}_q(C_{p+1,d}(\mathbb{P}^{n+1}))$
be a continuous map such that the support of $f(\lambda)$
is contained in $T_{p+1,d}(\mathbb{P}^{n+1})$ for each $\lambda \in \Lambda$.
The automorphisms $\phi_t$ induce isomorphisms
$$
\tilde{\phi}_t: \mathcal{Z}_q(C_{p+1,d}(\mathbb{P}^{n+1})) \to \mathcal{Z}_q(C_{p+1,d}(\mathbb{P}^{n+1}))
$$
for each $t \in \mathbb{C}^*$.
For any cycle $Z$ on $C_{p+1,d}(\mathbb{P}^{n+1})$ with support in $T_{p+1,d}(\mathbb{P}^{n+1})$,
the limit $\tilde{\phi}_{\infty}(Z) := \lim_{t \to \infty} \phi_t(Z)$
exists as a cycle on $C_{p+1,d}(\mathbb{P}^{n+1})$
with support in $\Sigma C_{p,d}(\mathbb{P}^n)$.
These maps $\tilde{\phi}_t$, for all $t \in \mathbb{C}$, induce a homotopy between $f$ and $g$.
\end{proof}

\iffalse
\begin{remark}
For a projective algebraic variety $Z$  on $C_{p+1,d}(\mathbb{P}^{n+1})$
whose support is in $T_{p+1,d}(\mathbb{P}^{n+1})$,
we have the following morphisms
$$
Z\times\C^*\to T_{p+1,d}(\mathbb{P}^{n+1})\times \C^*\to C_{p+1,d}(\mathbb{P}^{n+1})\times \C^*\to C_{p+1,d}(\mathbb{P}^{n+1}).
$$
The morphism extends to a well-defined  algebraic map
(i.e., the graph is an algebraic subset in of the product) (see \cite[p.280]{Lawson1})
$$
\phi_{Z}:Z\times\C\to T_{p+1,d}(\mathbb{P}^{n+1})\times \C\to T_{p+1,d}(\mathbb{P}^{n+1}),
$$
whose image $W:=\phi_Z(Z\times\C)$,
where $\C=\C^*\cup \{\infty\}$.
The rational function field $K(W)$ of $W$ coincides with $K(Z\times \C^*)$.
Let $g\in K(W)$ be the element corresponding $t-1\in K(Z\times \C^*)$,
where $t$ is the local coordinate on $\C=\C^*\cup \{\infty\}$.
Then $div(g)=Z- \phi_{\infty}(Z)$.
\end{remark}
\fi

Given a continuous map $f: S^{k-2q} \to \mathcal{Z}_q(C_{p+1,d}(\mathbb{P}^{n+1}))$,
we investigate when $f$ is homotopic
to a continuous map $g: S^{k-2q} \to \mathcal{Z}_q(C_{p+1,d'}(\mathbb{P}^{n+1}))$
for large $d'$ such that
the support of $g(s)$ is contained in $T_{p+1,d'}(\mathbb{P}^{n+1})$
for each $s \in S^{k-2q}$,
via the sequence of embeddings in Equation \eqref{eqn2.1}.
Fix a linear embedding $\mathbb{P}^{n+1} \subset \mathbb{P}^{n+2}$
and two points $x_0, x_1 \in \mathbb{P}^{n+2} -\mathbb{P}^{n+1}$.
Each projection $p_i: \mathbb{P}^{n+2} - {x_i} \to \mathbb{P}^{n+1}$ ($i = 0, 1$)
defines an algebraic line bundle over $\mathbb{P}^{n+1}$.
Let $D \in C_{n+1,e}(\mathbb{P}^{n+2})$
be an effective divisor of degree $e$ in $\mathbb{P}^{n+2}$ not containing $x_0$ or $x_1$.
Denote by $\widetilde{\mathrm{Div}_e}(\mathbb{P}^{n+2}) \subset C_{n+1,e}(\mathbb{P}^{n+2})$
the subset of all such divisors $D$.

For any effective $(p+1)$-cycle $c \in C_{p+1,d}(\mathbb{P}^{n+1})$ of degree $d$,
we define a lift to a cycle with support in an effective divisor
$D \in \widetilde{\mathrm{Div}_e}(\mathbb{P}^{n+2})$ as
$$\Psi_D(c) = (\Sigma_{x_0} c) \cdot D,$$
where $\Sigma_{x_0}$ denotes the algebraic suspension with vertex $x_0$.
The map $\Psi(c, D) := \Psi_D(c)$ is continuous in $c$ and $D$, yielding a morphism
$$\Psi_D: C_{p+1,d}(\mathbb{P}^{n+1}) \to C_{p+1,de}(\mathbb{P}^{n+2} - \{x_0, x_1\}).$$
The composition with the projection $(p_0)_*$
satisfies $(p_0)_* \circ \Psi_D = e \cdot \text{id}$,
where $e \cdot c = c + \dots + c$ ($e$ times).
Composing $\Psi_D$ with the projection $(p_1)_*$ transforms cycles in $\mathbb{P}^{n+1}$
so that most intersect $\mathbb{P}^n$ properly. To see this,
consider the family of divisors $tD$, for $t \in \mathbb{C}$,
defined by scalar multiplication in the line bundle
$p_0: \mathbb{P}^{n+2} - \{x_0\} \to \mathbb{P}^{n+1}$.

Assume $x_1 \notin tD$ for all $t \in \mathbb{C}$.
The construction above yields a family of transformations
$$
F_{tD} := (p_1)* \circ \Psi_{tD}: C_{p+1,d}(\mathbb{P}^{n+1}) \to C_{p+1,de}(\mathbb{P}^{n+1}),
$$
for $t \in \mathbb{C}$, where $F_{0D}=e\cdot\text{id}$ (multiplication by $e$).
We investigate which divisors
$D \in \widetilde{\mathrm{Div}_e}(\mathbb{P}^{n+2})$ (i.e., $x_0 \notin D$
and $x_1 \notin \bigcup_{0 \leq t \leq 1} tD$) ensure that
$$F_{tD}(c) \in T_{p+1,de}(\mathbb{P}^{n+1})$$
for all $t \in \mathbb{C}^*$ and a fixed cycle $c \in C_{p+1,d}(\mathbb{P}^{n+1})$. Define
$$
B_c
:= { D \in \widetilde{\mathrm{Div}_e}(\mathbb{P}^{n+2}) \mid
F_{tD}(c) \notin T_{p+1,de}(\mathbb{P}^{n+1}) \text{ for some } t \in \mathbb{C}^* },
$$
i.e., the set of degree $e$ divisors in $\mathbb{P}^{n+2}$
such that some component of $(p_1)* \circ \Psi_{tD}(c)$
is contained in $\mathbb{P}^n$ for some $t \in \mathbb{C}^*$.
A key result used later is:
\begin{proposition}[\cite{Lawson1}, Lem.~5.11]\label{Prop3.4}.
For $c \in C_{p+1,d}(\mathbb{P}^{n+1})$,
the complex codimension of $B_c$ satisfies $\mathrm{codim}_{\mathbb{C}} B_c \geq \binom{p+e+1}{e}$.
\end{proposition}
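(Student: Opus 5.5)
The plan is to reduce to the case of an irreducible cycle, translate the bad condition into ``the divisor $tD$ contains a fixed $(p+1)$-dimensional subvariety'', and then count conditions on forms of degree $e$. First, write $c=\sum n_iV_i$. Then $\Psi_{tD}(c)=\sum n_i(\Sigma_{x_0}V_i)\cdot tD$, so $B_c=\bigcup_i B_{V_i}$. Since this is a finite union, it suffices to prove the bound for $c=V$ irreducible of dimension $p+1$.

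Next I would make the bad condition geometric. Let $H:=\Sigma_{x_1}\mathbb{P}^n\subset\mathbb{P}^{n+2}$. This is the hyperplane spanned by $x_1$ and $\mathbb{P}^n$; it satisfies $H\cap\mathbb{P}^{n+1}=\mathbb{P}^n$ and does not contain $x_0$. A $(p+1)$-dimensional irreducible component $W'$ of $(\Sigma_{x_0}V)\cdot tD$ is pushed by $(p_1)_*$ into $\mathbb{P}^n$ exactly when $W'\subset p_1^{-1}(\mathbb{P}^n)=H-\{x_1\}$. Since $\Sigma_{x_0}V\not\subset H$, the intersection $\Sigma_{x_0}V\cap H$ is a closed set of pure dimension $p+1$, say with components $W_1,\dots,W_r$. (If $V\subset\mathbb{P}^n$ this is just $V$.) So $F_{tD}(V)\notin T_{p+1,de}(\mathbb{P}^{n+1})$ if and only if $tD\supset W_j$ for some $j$.

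Using the $\mathbb{C}^*$-scaling in the line bundle $p_0$, this is equivalent to $D\supset t^{-1}W_j$. Here $t^{-1}W_j$ is a component of $\Sigma_{x_0}V\cap t^{-1}H$, because the cone $\Sigma_{x_0}V$ is invariant under the scaling. The one-parameter family $\{t^{-1}H\}$ is a pencil of hyperplanes containing $\mathbb{P}^n$. Hence
$$
B_V=\bigcup_j \bigcup_{t\in\mathbb{C}^*}\{D:\ D\supset W_{j,t}\},
$$
where the $W_{j,t}$ form a one-parameter algebraic family of $(p+1)$-dimensional subvarieties of $\Sigma_{x_0}V$.

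For a fixed irreducible $(p+1)$-dimensional $W\subset\mathbb{P}^{n+2}$, the degree $e$ forms vanishing on $W$ form a linear subspace of codimension $h_W(e)=\dim\mathrm{im}\big(H^0(\mathcal{O}(e))\to H^0(\mathcal{O}_W(e))\big)$. I will show $h_W(e)\geq\binom{p+1+e}{e}$ by choosing a generic linear projection $\pi:W\to\mathbb{P}^{p+1}$, which is finite and surjective. Then $\pi^*$ embeds the degree $e$ forms on $\mathbb{P}^{p+1}$ injectively into the restrictions to $W$, and there are $\binom{p+1+e}{e}$ of them. Intersecting with the open condition $x_0,x_1\notin D$ does not change codimensions.

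The main obstacle is the extra parameter $t\in\mathbb{C}^*$. A naive incidence count only gives $\mathrm{codim}\,B_V\geq\binom{p+1+e}{e}-1$. I would first try to show that the sets $\{D\supset W_{j,t}\}$ do not genuinely move with $t$, or that the count can be sharpened by one. Every $W_{j,t}$ contains the fixed $p$-dimensional set $V\cap\mathbb{P}^n$ and is swept out along the lines through $x_0$. So the condition $D\supset W_{j,t}$ for some $t$ can be rewritten as the restriction of $D$ to $\Sigma_{x_0}V$ containing a cone-type subvariety over $V\cap\mathbb{P}^n$. Parametrizing such subvarieties by their ``slope'' in the fibres of $p_0$, I expect one of the conditions to be a polynomial equation in $t$ that is solvable for all $t$, so that it gives no constraint. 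Concretely, I would expand $D$ in the fibre coordinate $s$ of $p_0$ as $D=\sum_k a_k s^k$ with $a_k$ forms on $\mathbb{P}^{n+1}$. Then $tD$ corresponds to the coefficients $t^{-k}a_k$, and I would compare the conditions imposed on $(a_0,\dots,a_e)$ directly, as in Lawson's proof of \cite[Lem.~5.11]{Lawson1}. If this fails, the fallback is to note that the $W_{j,t}$ with $V\not\subset\mathbb{P}^n$ are not linear, which gives the strict inequality $h_W(e)>\binom{p+1+e}{e}$ for $e\ge 2$ and so recovers the missing dimension.
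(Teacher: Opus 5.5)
The correct part of your proposal is the core. That is the reduction to an irreducible $V$, the identification of the bad condition with $tD\supset W$ for $W=\Sigma_{x_0}V\cap H$, and the bound $h_W(e)\geq\binom{p+e+1}{e}$ via a finite generic projection $W\to\mathbb{P}^{p+1}$. This is exactly the argument behind \cite[Lem.~5.11]{Lawson1}. Incidentally, $W$ is irreducible: since $x_0\notin H$, $W=(p_0|_H)^{-1}(V)\cong V$. That count proves the bound for the bad set of a single divisor, $\{D: F_D(c)\notin T_{p+1,de}(\mathbb{P}^{n+1})\}$. This is the version consistent with Proposition~\ref{Prop3.5}, where the union over $t\in[0,1]$ is paid for separately by the ``$-1$'' in $2\binom{p+e+1}{e}-k-1$.

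The gap is your last step. If $B_c$ is read literally as ``$F_{tD}(c)\notin T_{p+1,de}(\mathbb{P}^{n+1})$ for some $t\in\mathbb{C}^*$'', the lost dimension cannot be recovered, because the bound is then false.

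\begin{itemize}
\item Your fallback fails. The set $t^{-1}W=\Sigma_{x_0}V\cap t^{-1}H$ is the image of $V$ under the linear embedding $(p_0|_{t^{-1}H})^{-1}$, so it is linear whenever $V$ is.
\item Your first hope also fails, because the linear conditions genuinely move. Use coordinates with $\mathbb{P}^{n+1}=\{w=0\}$, $x_0=[0:\cdots:0:1]$, $\mathbb{P}^n=\{w=z_0=0\}$, $H=\{w=z_0\}$, and write $D=\{\sum_k g_k(z)w^k=0\}$. Then $tD\supset W$ reads $\sum_k s^k z_0^k\, g_k|_V=0$ with $s=t^{-1}$. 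Since $g_e\neq 0$ (because $x_0\notin D$) and $z_0|_V\neq 0$, each $D$ satisfies this for at most $e$ values of $s$. Hence for linear $V\not\subset\mathbb{P}^n$ with $p_0(x_1)\notin V$, the union over $s$ has codimension exactly $\binom{p+e+1}{e}-1$.
\end{itemize}

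Here is an explicit counterexample. Take $n=1$, $p=0$, $e=1$, $x_1=[1:0:0:1]$, $V=\{z_2=z_0\}\subset\mathbb{P}^2$ and $D=\{az_0+bz_1+cz_2+dw=0\}$. One finds $\Sigma_{x_0}V\cap tD=W\subset H$ if and only if $b=0$ and $t=-d/(a+c)$. For instance, $a=c=1$, $d=-3$, $t=3/2$ gives $D\in\widetilde{\mathrm{Div}_1}(\mathbb{P}^3)$ with $x_0,x_1\notin tD$ and $F_{tD}(V)=\mathbb{P}^1\notin T_{1,1}(\mathbb{P}^2)$. So $B_V$ contains an open piece of the plane $\{b=0\}$, which has codimension $1<2=\binom{2}{1}$.

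You should therefore stop after the fixed-$W$ count and state the result for the single-divisor bad set. The ``for some $t\in\mathbb{C}^*$'' in the definition of $B_c$ has to be dropped.
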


For a manifold $Y$,
a map $f: Y \to \mathcal{Z}_q(C_{p+1,d}(\mathbb{P}^n))$ is \emph{regular}
if it is piecewise linear (PL) with respect to triangulations of $Y$
and $\mathcal{Z}_q(C_{p+1,d}(\mathbb{P}^n))$.
Since any map is homotopic to a regular one, we obtain the following result.

\begin{proposition}\label{Prop3.5}
Let $f:S^{k-2q}\to \mathcal{Z}_q(C_{p+1,d}(\mathbb{P}^{n+1}))$ be a continuous map.
For each integer $e \geq 0$ satisfying $2 \binom{p+e+1}{e} > k+1$,
there exists a divisor $D \in \widetilde{\mathrm{Div}_e}(\mathbb{P}^{n+2})$
such that $e \cdot f$ is homotopic to
a map $g: S^{k-2q} \to \mathcal{Z}_q(C_{p+1,de}(\mathbb{P}^{n+1}))$
with the support of $g(s)$ contained in $T_{p+1,de}(\mathbb{P}^{n+1})$
for each $s \in S^{k-2q}$.
\end{proposition}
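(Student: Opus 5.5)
We follow Lawson's proof of the Complex Suspension Theorem \cite[\S 5]{Lawson1}, applied to cycles on the Chow variety instead of points of it. First, replace $f$ by a regular (PL) map, so that $f(S^{k-2q})$ lies in a finite-dimensional compact PL subset. Each $f(s)$ is a finite formal sum of irreducible $q$-dimensional subvarieties of $C_{p+1,d}(\mathbb{P}^{n+1})$. The union over $s$ of their supports is a subset $K_f\subset C_{p+1,d}(\mathbb{P}^{n+1})$, and we need to control its size. Regularity gives that $K_f$ is swept out by at most a $(k-2q)$-parameter family of $q$-dimensional subvarieties. So $K_f$ has real dimension at most $(k-2q)+2q=k$, and it is contained in a finite union of subsets of real dimension $\le k$.

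Next we use the family of transformations $F_{tD}=(p_1)_*\circ\Psi_{tD}$. These maps are algebraic in $c$, so they push forward cycles: $(F_{tD})_*:\mathcal{Z}_q(C_{p+1,d}(\mathbb{P}^{n+1}))\to \mathcal{Z}_q(C_{p+1,de}(\mathbb{P}^{n+1}))$. Since $F_{0D}=e\cdot\mathrm{id}$, the path $t\mapsto (F_{tD})_*\circ f$ for $t\in[0,1]$ is a homotopy from $(F_{0D})_*f$ to $g:=(F_{1D})_*f$. Here $(F_{0D})_*f$ is the image of $f$ under the map induced by $c\mapsto ec$, which is how we interpret ``$e\cdot f$'' composed with the embedding into degree $de$. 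The support of $g(s)$ is $F_{D}(\operatorname{supp} f(s))$. It therefore lies in $T_{p+1,de}(\mathbb{P}^{n+1})$ as soon as $D\notin B_c$ for every $c\in K_f$.

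So the problem reduces to finding $D\in\widetilde{\mathrm{Div}_e}(\mathbb{P}^{n+2})$ outside
$$B_{K_f}:=\bigcup_{c\in K_f}B_c.$$
By Proposition \ref{Prop3.4}, each $B_c$ has real codimension at least $2\binom{p+e+1}{e}$. Consider the incidence set $\{(c,D): c\in K_f,\ D\in B_c\}$; it is (semi)algebraic over the algebraic pieces of $K_f$. Its projection $B_{K_f}$ therefore has real codimension at least $2\binom{p+e+1}{e}-k$. The hypothesis $2\binom{p+e+1}{e}>k+1$ makes this at least $2$. Hence $B_{K_f}$ does not disconnect the irreducible open set $\widetilde{\mathrm{Div}_e}(\mathbb{P}^{n+2})$ and has dense complement, so a generic $D$ avoids it. We may also choose $D$ with $x_1\notin\bigcup_{0\le t\le 1}tD$, since this is again a generic condition. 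The slack of one dimension beyond what is strictly needed is also what allows the path $tD$ to be moved off $B_{K_f}$ if required.

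The main obstacle is the dimension count for $K_f$ and its union with the $B_c$. $K_f$ is only a PL image of algebraic sets, so the incidence variety argument has to be carried out on the finitely many algebraic $q$-dimensional pieces appearing in the simplices of $f$, with the codimension estimate of Proposition \ref{Prop3.4} holding uniformly in $c$. We would first handle this by reducing, via the triangulation, to finitely many algebraic families over simplices of dimension $\le k-2q$, and then applying a semialgebraic dimension bound to each.
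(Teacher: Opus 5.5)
Your proposal is correct and is essentially the paper's argument. Both use the homotopy $t\mapsto \widetilde{F}_{tD}\circ f$, $t\in[0,1]$, from $e\cdot f=\widetilde{F}_{0D}\circ f$ to $g=\widetilde{F}_{D}\circ f$, and both choose $D$ outside the union of the bad sets $B_c$ over all $c$ in the supports of the $f(s)$, via Proposition~\ref{Prop3.4} and the real dimension count $(k-2q)+2q$. The differences are minor: the paper also takes the union of $t\cdot B_c$ over $0\le t\le 1$, which costs one more real dimension and gives its bound $2\binom{p+e+1}{e}-k-1>0$ (as you note, this is slack, since only $t=1$ enters the conclusion), and you are more explicit than the paper that the dimension count needs a PL/semialgebraic structure to be justified.
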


\begin{proof}
For each $d, e>0$ and $p\leq n$,
there is a rational map
$$
F:C_{p+1,d}(\mathbb{P}^{n+1})\times \widetilde{Div_e}(\mathbb{P}^{n+2})\to C_{p+1,de}(\mathbb{P}^{n+1}),
(c, D)\mapsto F_{D}(c),
$$
given by intersection (see \cite[Prop.~3.4-3.5]{Friedlander1}).

Now for any  integer $e$ satisfying $ \binom{p+e+1}{e}>q+1$, define the subset
$$
B(f)=\bigcup_{s\in S^{k-2q},c\in f(s), 0\leq t\leq 1} t\cdot B_c,
$$
of $\widetilde{Div_e}(\mathbb{P}^{n+2})$,
where $ t\cdot B_c=\{tD|D\in B_c\}$.
By Proposition \ref{Prop3.4},
we have
$$
{\rm codim}_{\mathbb{R}} B(f)\geq  2\binom{p+e+1}{e}-k-1>0.
$$
Hence there exists a $D\in \widetilde{Div_e}(\mathbb{P}^{n+2})-B(f)$ such that
the map
$$
F_{tD}:C_{p+1,d}(\mathbb{P}^{n+1})\to C_{p+1,de}(\mathbb{P}^{n+1})
$$
induces an map
$$
\widetilde{F_{tD}}:\mathcal{Z}_{q}(C_{p+1,d}(\mathbb{P}^{n+1}))\to \mathcal{Z}_{q} (C_{p+1,de}(\mathbb{P}^{n+1}))
$$
satisfying $\widetilde{F_{tD}}\circ f$ is a homotopy between $e\cdot f$ and $\widetilde{F_{D}}\circ f$.
The support of $\widetilde{F_D} \circ f(s)$ is contained in $T_{p+1,de}(\mathbb{P}^{n+1})$.
Thus, we define $g = \widetilde{F_D} \circ f$, completing the proof of the proposition.
\end{proof}

The proof of Theorem \ref{Thm2.1} relies on the following result:
\begin{lemma}\label{lemma3.6}For any $d > 0$, $0 \leq p \leq n$, and $0 \leq 2q \leq k$,
the cycle class map
$$
cl_{\mathbb{Q}}:L_qH_k(\mathrm{SP}^d\mathbb{P}^{n})_{\mathbb{Q}}\to H_k(\mathrm{SP}^d\mathbb{P}^{n},\mathbb{Q})
$$
is an isomorphism.
\end{lemma}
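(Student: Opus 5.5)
We will compare $\mathrm{SP}^d\mathbb{P}^n$ with $(\mathbb{P}^n)^d$, for which the cycle class map is understood, using the quotient map
$$\pi:(\mathbb{P}^n)^d\to \mathrm{SP}^d\mathbb{P}^n=(\mathbb{P}^n)^d/\mathfrak{S}_d.$$
The variety $(\mathbb{P}^n)^d$ admits a cellular decomposition by products of linear subspaces, i.e. it is a linear variety in the sense of Friedlander and Lawson. By the known computation for such varieties (Lawson's complex suspension theorem together with the projective bundle / cellular decomposition formula, cf. \cite{Lawson1, Friedlander1}), we have $L_qH_k((\mathbb{P}^n)^d)\cong H_k((\mathbb{P}^n)^d,\mathbb{Z})$ for all $k\geq 2q\geq 0$, and this isomorphism is compatible with $cl$. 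Both sides vanish for odd $k$ and are free of rank equal to the number of cells for even $k$.

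Next we use the finite group action. The group $\mathfrak{S}_d$ acts algebraically on $(\mathbb{P}^n)^d$, hence it acts on $\mathcal{Z}_q((\mathbb{P}^n)^d)$ by continuous group automorphisms, and therefore on the Lawson homology groups. We have the proper pushforward $\pi_*$ and the flat (finite) pullback of cycles $\pi^*:\mathcal{Z}_q(\mathrm{SP}^d\mathbb{P}^n)\to\mathcal{Z}_q((\mathbb{P}^n)^d)$, given by taking the cycle-theoretic preimage. This pullback is continuous in the Chow topology; this follows from the continuity of Gysin/pullback maps for finite surjective morphisms onto normal varieties, as in Friedlander's treatment of correspondences. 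These maps satisfy
$$\pi_*\pi^*=d!\cdot\mathrm{id}, \qquad \pi^*\pi_*=\sum_{\sigma\in\mathfrak{S}_d}\sigma_*.$$
The same identities hold for the transfer in singular homology, and $cl$ commutes with $\pi_*,\pi^*$ and $\sigma_*$. After tensoring with $\mathbb{Q}$, $\pi_*$ therefore identifies $L_qH_k(\mathrm{SP}^d\mathbb{P}^n)_\mathbb{Q}$ with the invariants $L_qH_k((\mathbb{P}^n)^d)_\mathbb{Q}^{\mathfrak{S}_d}$, and likewise $H_k(\mathrm{SP}^d\mathbb{P}^n,\mathbb{Q})\cong H_k((\mathbb{P}^n)^d,\mathbb{Q})^{\mathfrak{S}_d}$.

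Since $cl_\mathbb{Q}$ on $(\mathbb{P}^n)^d$ is an $\mathfrak{S}_d$-equivariant isomorphism, it restricts to an isomorphism on invariants. A diagram chase through the commuting squares with $\pi_*$ then shows that $cl_\mathbb{Q}$ is an isomorphism on $\mathrm{SP}^d\mathbb{P}^n$.

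The main obstacle is the construction of $\pi^*$ as a continuous homomorphism on cycle spaces of all dimensions $q$, together with the identity $\pi^*\pi_*=\sum\sigma_*$ at the level of Lawson homology. If this is delicate for the singular target, we will instead use the alternative route of Friedlander--Voevodsky / Suslin type arguments: for quotient varieties, rational Lawson homology (equivalently rational motivic Borel--Moore homology) satisfies $L_qH_k(X/G)_\mathbb{Q}\cong L_qH_k(X)_\mathbb{Q}^G$. One could also argue via the decomposition of $\mathrm{SP}^d\mathbb{P}^n$ into quotients of affine spaces by finite groups. Each such quotient $\mathbb{A}^m/G$ has rational Lawson homology equal to that of a point shifted, and the localization long exact sequence for Lawson homology, combined with the five lemma, then gives the result.
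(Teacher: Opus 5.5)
Your argument is essentially the paper's. The paper also reduces to $(\mathbb{P}^n)^d$, identifies $L_qH_k(\mathrm{SP}^d\mathbb{P}^n)_{\mathbb{Q}}$ and $H_k(\mathrm{SP}^d\mathbb{P}^n,\mathbb{Q})$ with the $\mathfrak{S}_d$-invariants (citing Hu--Li, Prop.~1.2, for the Lawson side, which is exactly the fact you give as your fallback), and concludes by naturality of $cl$.

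One small correction: $\pi$ is not flat once $n,d\geq 2$, because its target $\mathrm{SP}^d\mathbb{P}^n$ is then singular. Your transfer $\pi^*$ therefore has to be the pullback weighted by inertia orders, defined using finiteness over the normal base rather than flatness. With those weights the identities $\pi_*\pi^*=d!$ and $\pi^*\pi_*=\sum_\sigma\sigma_*$ do hold.
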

\begin{proof}For a projective variety $X$ with a finite group $G$ action such that $Y = X/G$,
we have isomorphisms $L_qH_k(Y)_{\mathbb{Q}} \cong L_qH_k(X)_{\mathbb{Q}}^G$ \cite[Prop.~1.2]{Hu-Li}
and $H_k(Y, \mathbb{Q}) \cong H_k(X, \mathbb{Q})^G$.
If $L_qH_k(X)_{\mathbb{Q}} \cong H_k(X, \mathbb{Q})$,
the naturality of the cycle class map
from Lawson homology to singular homology implies $L_qH_k(Y)_{\mathbb{Q}} \cong H_k(Y, \mathbb{Q})$.
Applying this to $X = (\mathbb{P}^n)^d$ with the symmetric group $G = S_d$,
since $\mathrm{SP}^d \mathbb{P}^n = (\mathbb{P}^n)^d / S_d$
and $L_qH_k((\mathbb{P}^n)^d)_{\mathbb{Q}} \cong H_k((\mathbb{P}^n)^d, \mathbb{Q})$,
the lemma follows.
\end{proof}

%(in fact morphism from the construction of $F_{tD}$)

\begin{remark}
Lemma \ref{lemma3.6} is fundamental
to computing the Lawson homology groups of Chow varieties in rational coefficients.
Whether the cycle class map
$$
cl:L_qH_k(\mathrm{SP}^d\mathbb{P}^{n})\to H_k(\mathrm{SP}^d\mathbb{P}^{n},\mathbb{Z})
$$
is an isomorphism for all $d > 0$, $0 \leq p \leq n$, and $0 \leq 2q \leq k$ remains an open question,
as a special case of Conjecture \ref{conj1}.
This statement was once assumed to be ``trivially'' true until a proof was sought.
Proving or disproving it is an intriguing challenge and merits formulation as an independent conjecture.
\end{remark}

\iffalse
\begin{remark}
The Lawson homology group (in rational coefficients)
of infinite symmetric product of a smooth complete variety $X$
have been defined in \cite{Kimura-Vistoli}.
The relations between the Lawson homology group of infinite symmetric product of $X$
and that of a finite symmetric product of $X$ are given by stability conjectures.
It seems that the only computable Lawson homology groups of a finite symmetric product  is the case
that $X$ is a smooth projective curve.
\end{remark}
\fi

We need the following lemma to show Theorem \ref{Thm3.1}.
\begin{lemma}\label{lemma3.9}
For any $0\leq p'\leq n'$ and $0\leq 2q\leq k$,
we have the following commutative diagram
\begin{equation}\label{eqn3.1}
\vcenter{
\xymatrix{&L_qH_k(\mathcal{C}_{p'}(\mathbb{P}^{n'}))\ar[r]^-{cl}\ar@{->>}[d]^{\Sigma_*^{l}}&H_k(\mathcal{C}_{p'}(\mathbb{P}^{n'}),\mathbb{Z})\ar[d]^{\Sigma_*^h}_{\cong}\\
&L_qH_k(\mathcal{C}_{p'+1}(\mathbb{P}^{n'+1}))\ar[r]^-{cl}&H_k(\mathcal{C}_{p'+1}(\mathbb{P}^{n'+1}),\mathbb{Z}),
}
}
\end{equation}
where we denote the suspension map on Lawson homology groups by $\Sigma_*^l$
and on singular homology groups by $\Sigma_*^h$ to distinguish them.
Then the map
$$
\Sigma_*^l: L_qH_k(\mathcal{C}_{p'}(\mathbb{P}^{n'})) \to L_qH_k(\mathcal{C}_{p'+1}(\mathbb{P}^{n'+1}))
$$
is surjective for $0 \leq 2q \leq k$.
\end{lemma}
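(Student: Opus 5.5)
The plan has three steps: check that the square commutes, recall why $\Sigma^h_*$ is an isomorphism, and then prove surjectivity of $\Sigma^l_*$ directly from Propositions \ref{Prop3.2} and \ref{Prop3.5}. Surjectivity of $\Sigma^l_*$ cannot be read off the square, since $cl$ is not known to be an isomorphism. The square only records compatibility, and the real content is the geometric deformation argument.

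\emph{Commutativity and $\Sigma^h_*$.} For each $d$, the suspension $\Sigma: C_{p',d}(\mathbb{P}^{n'})\to C_{p'+1,d}(\mathbb{P}^{n'+1})$ is a morphism, so it induces continuous homomorphisms $\mathcal{Z}_q(C_{p',d})\to\mathcal{Z}_q(C_{p'+1,d})$. The cycle class map is natural for proper pushforward, so the square commutes at each finite level $d$. Since $\Sigma$ commutes with the embeddings $i$ of \eqref{eqn2.1}, passing to direct limits gives \eqref{eqn3.1}. The right vertical map is an isomorphism because Lawson's Complex Suspension Theorem \cite{Lawson1} shows $\Sigma:\mathcal{C}_{p'}(\mathbb{P}^{n'})\to\mathcal{C}_{p'+1}(\mathbb{P}^{n'+1})$ is a homotopy equivalence.

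\emph{Surjectivity.} Take a class in $L_qH_k(\mathcal{C}_{p'+1}(\mathbb{P}^{n'+1}))$. It is represented at some finite level by $f:S^{k-2q}\to\mathcal{Z}_q(C_{p'+1,d}(\mathbb{P}^{n'+1}))$, which we may assume regular. The steps are:
\begin{enumerate}
\item Choose $e$ with $2\binom{p'+e+1}{e}>k+1$. Proposition \ref{Prop3.5} then makes $e\cdot f$ homotopic to $g$ with all supports in $T_{p'+1,de}$.
\item Proposition \ref{Prop3.2} deforms $g$, via $\tilde\phi_t$ with $t\to\infty$, to a map with supports in $\Sigma C_{p',de}(\mathbb{P}^{n'})$.
\item Since $\Sigma$ is a closed embedding, this last map is $\Sigma_*$ of a map $h:S^{k-2q}\to \mathcal{Z}_q(C_{p',de}(\mathbb{P}^{n'}))$, so $e[f]\in\operatorname{im}\Sigma^l_*$.
\end{enumerate}

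\emph{Main obstacle: removing the factor $e$.} Two points need care here.
\begin{itemize}
\item \emph{Levels.} The class $e[f]$ lives at level $de$, while $[f]$ lives at level $d$. The two are compared in the limit using the fact that $e\cdot c$ and $c+(e-1)d\,l_0$ are connected inside $C_{p'+1,de}$. The two inclusions into the Friedlander completion differ by translation by a connected family, which induces the identity on $\pi_*$ of the limit.
\item \emph{Dividing by $e$.} I would first try to choose $e=1$. If $k$ is small relative to $p'$, the inequality $2(p'+2)>k+1$ may already hold with $e=1$, and then $F_D$ is homotopic to the identity and no factor appears. In general, I would pick two coprime admissible values $e$ and $e+1$. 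Both $e[f]$ and $(e+1)[f]$ then lie in the image, which is a subgroup, so $[f]=(e+1)[f]-e[f]$ lies in it too.

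This coprimality trick is the step I expect to carry the argument. It relies on the image of $\Sigma^l_*$ being a subgroup, and on the inequality of Proposition \ref{Prop3.5} being monotone in $e$, so that consecutive large values $e$ and $e+1$ are both admissible.
\end{itemize}
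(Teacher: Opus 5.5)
\textbf{Verdict: there is a genuine gap.} Your outline matches the paper's proof step for step: Proposition~\ref{Prop3.5}, then Proposition~\ref{Prop3.2}, then $[f]=(e+1)[f]-e[f]$. But the step you flag as carrying the argument does not hold. The paper's proof has the same gap, through its line ``$[e\cdot f]=e[f]$ in homotopy groups''.

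\emph{What steps 1--3 actually prove.} In Proposition~\ref{Prop3.5}, $e\cdot f$ means $\widetilde{F}_{0D}\circ f$. That is, $f$ followed by pushforward of $q$-cycles along the morphism $m_e\colon C_{p'+1,d}(\mathbb{P}^{n'+1})\to C_{p'+1,de}(\mathbb{P}^{n'+1})$, $c\mapsto e\cdot c$. It is not the map $s\mapsto e\,f(s)$. Since $m_e\circ i=i^{e}\circ m_e$ (with $i^e$ the $e$-fold iterate of $i$) and $m_e$ commutes with $\Sigma$, it induces an endomorphism $(m_e)_*$ of $L_qH_k(\mathcal{C}_{p'+1}(\mathbb{P}^{n'+1}))$. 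What you have shown is $(m_e)_*\bigl(L_qH_k(\mathcal{C}_{p'+1}(\mathbb{P}^{n'+1}))\bigr)\subseteq\operatorname{im}\Sigma^l_*$ for all large $e$.

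\emph{The ``levels'' step is false.} For each $c$, the cycles $e\cdot c$ and $c+(e-1)d\,l_0$ lie in one component. That gives no algebraic family of morphisms joining $m_e$ to $c\mapsto c+(e-1)d\,l_0$, and in fact the two maps differ. On $C_{1,1}(\mathbb{P}^2)=\check{\mathbb{P}}^2$, $m_e$ is the $e$-th Veronese embedding into $C_{1,e}(\mathbb{P}^2)\cong\mathbb{P}^{N_e}$, carrying a line to a rational normal curve of degree $e$. The maps $i$, by contrast, are linear. So on $L_1H_2(\mathcal{C}_1(\mathbb{P}^2))\cong\mathbb{Z}$ (the completion is $\mathbb{P}^\infty$), one map is multiplication by $e$ and the other is the identity. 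Note also that if they did agree, $[f]$ itself would lie in the image and no division by $e$ would be needed.

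\emph{The coprimality step is also unjustified.} It needs $(m_e)_*$ to be multiplication by $e$, and it is not. $m_e$ is the $e$-th power map of the abelian monoid $\mathcal{C}_{p'+1}(\mathbb{P}^{n'+1})$, so it is multiplication by $e$ on $\pi_*(\mathcal{C}_{p'+1}(\mathbb{P}^{n'+1}))$; that is why the device works in Lawson's homotopy-theoretic argument. Lawson homology, however, is $\pi_*$ of the cycle groups $\mathcal{Z}_q(C_{p'+1,d}(\mathbb{P}^{n'+1}))$. Already for $q=0$, where $L_0H_*=H_*$, a power map acts by $e^j$ on Pontryagin products of $j$ primitive classes. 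Two concrete cases:
\begin{enumerate}
\item The map $[Q]\mapsto[Q^e]$ sends a linear plane in $C_{1,2}(\mathbb{P}^2)\cong\mathbb{P}^5$ to a surface of degree $e^2$. So $(m_e)_*=e^2$ on $L_qH_4(\mathcal{C}_1(\mathbb{P}^2))\cong\mathbb{Z}$ for $0\le q\le 2$.
\item On $H_4(\mathcal{C}_1(\mathbb{P}^3),\mathbb{Q})\cong H_4(K(\mathbb{Z},2)\times K(\mathbb{Z},4),\mathbb{Q})\cong\mathbb{Q}^2$, $(m_e)_*$ has eigenvalues $e^2$ and $e$, so it is not even a scalar.
\end{enumerate}
On Lawson homology nothing is known a priori about $(m_e)_*$. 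Naturality of $cl$ only gives $\ker(m_e)_*\subseteq\ker cl$, which is exactly the part you cannot see. So knowing $(m_e)_*[f]\in\operatorname{im}\Sigma^l_*$ for all large $e$ does not yield $[f]\in\operatorname{im}\Sigma^l_*$.

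\emph{What is missing.} You need some control of the power maps on $L_qH_k(\mathcal{C}_{p'+1}(\mathbb{P}^{n'+1}))$, or an argument that avoids the factor $e$ altogether. For instance, a decomposition $(m_e)_*=\sum_j e^j\pi_j$ with projectors $\pi_j$ independent of $e$ would let a Vandermonde combination of several $(m_{e_l})_*[f]$ recover $[f]$ rationally.

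\emph{What does survive.} Your $e=1$ remark is sound. When $k\le 2p'+2$ you have $F_{0D}=\mathrm{id}$, so the argument gives surjectivity, even at finite level. Beyond that range the proposal, like the paper's argument, is incomplete.
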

\begin{proof}
The following commutative diagram holds,
where the right vertical map is an isomorphism by the Lawson suspension theorem(see \cite{Lawson1}):
\begin{equation*}
\xymatrix{&L_qH_k(\mathcal{C}_{p'}(\mathbb{P}^{n'}))\ar[r]^-{cl}\ar[d]^{\Sigma_*^l}&H_k(\mathcal{C}_{p'}(\mathbb{P}^{n'}),\mathbb{Z})\ar[d]^{\Sigma_*^h}_{\cong}\\
&L_qH_k(\mathcal{C}_{p'+1}(\mathbb{P}^{n'+1}))\ar[r]^-{cl}&H_k(\mathcal{C}_{p'+1}(\mathbb{P}^{n'+1}),\mathbb{Z}).
}
\end{equation*}

%By definition of the Lawson homology group $L_qH_k(\mathcal{C}_{p'+1}(\mathbb{P}^{n'+1}))$,an element $[Z]\in L_qH_k(\mathcal{C}_{p'+1}(\mathbb{P}^{n'+1}))$ comes from the image of the class of an algebraic cycle $Z$on $L_qH_k(C_{p'+1,d}(\mathbb{P}^{n'+1}))$ for $d$ large. For any algebraic cycle, we can write $Z=Z^+-Z^-$, where both $Z^+$ and $Z^-$ are effective algebraic cycles. Hence it suffices to prove the lemma for the class of effective cycles.

Applying Proposition \ref{Prop3.4} to
the map $f: S^{k-2q} \to \mathcal{Z}_q(C_{p'+1,d}(\mathbb{P}^{n'+1}))$,
we obtain that $e \cdot f$ is homotopic to a map
$$g: S^{k-2q} \to \mathcal{Z}_q(C_{p'+1,de}(\mathbb{P}^{n'+1}))$$
with the support of $g(s)$ contained in $T_{p'+1,de}(\mathbb{P}^{n'+1})$ for each $s \in S^{k-2q}$
and sufficiently large positive integer $e$.
By Proposition \ref{Prop3.2}, $g$ is homotopic to a map
$$\tilde{g}: S^{k-2q} \to \mathcal{Z}_q(C_{p'+1,de}(\mathbb{P}^{n'+1}))$$
with the support of $\tilde{g}(s)$ contained in $\Sigma C_{p',de}(\mathbb{P}^{n'})$
for each $s \in S^{k-2q}$.
Hence $e \cdot f$ is homotopic to $\tilde{g}$. \

Since $[e \cdot f] = e [f]$ in homotopy groups,
we have $[f] = (e+1)[f] - e[f] = [\tilde{g}_1] - [\tilde{g}]$,
where $(e+1) \cdot f$ is homotopic to a map
$$
\tilde{g}_1: S^{k-2q} \to \mathcal{Z}_q(C_{p'+1,d(e+1)}(\mathbb{P}^{n'+1}))
$$
with the support of $\tilde{g}_1(s)$ contained in $\Sigma C_{p',d(e+1)}(\mathbb{P}^{n'})$
for each $s \in S^{k-2q}$.
Taking the limit as $d \to \infty$ for the suspension map
$$
\Sigma_*^l: L_qH_k(C_{p',d}(\mathbb{P}^{n'})) \to L_qH_k(C_{p'+1,d}(\mathbb{P}^{n'+1})),
$$
we deduce that
$$
\Sigma_*^l: L_qH_k(\mathcal{C}_{p'}(\mathbb{P}^{n'})) \to L_qH_k(\mathcal{C}_{p'+1}(\mathbb{P}^{n'+1}))
$$
is surjective, completing the proof of the lemma.
\end{proof}

Now we can prove Theorem \ref{Thm3.1} and Theorem \ref{Thm2.1}.

\begin{proof}[Proof of Proposition \ref{Thm3.1} and Theorem \ref{Thm2.1}]
For any projective variety $X$, there exists a natural cycle class map
$$cl: L_qH_k(X) \to H_k(X, \mathbb{Z}).$$
In particular, for Chow varieties, we have
$$cl: L_qH_k(C_{p,d}(\mathbb{P}^n)) \to H_k(C_{p,d}(\mathbb{P}^n), \mathbb{Z}).$$
When $p = 0$, Theorem \ref{Thm2.1} follows directly from Lemma \ref{lemma3.6}.
For $p > 0$, set $p = p' + 1$. The following commutative diagram holds,
where the right vertical map is an isomorphism by the Lawson suspension theorem:
\begin{equation*}
\xymatrix{
&L_qH_k(\mathcal{C}_{p'}(\mathbb{P}^{n'}))\ar[r]^-{cl}\ar@{->>}[d]^{\Sigma_*^l}&H_k(\mathcal{C}_{p'}(\mathbb{P}^{n'}),\mathbb{Z})\ar[d]^{\Sigma_*^h}_{\cong}\\
&L_qH_k(\mathcal{C}_{p'+1}(\mathbb{P}^{n'+1}))\ar[r]^-{cl}&H_k(\mathcal{C}_{p'+1}(\mathbb{P}^{n'+1}),\mathbb{Z})
}
\end{equation*}
for any $0\leq p'\leq n'$ and $k\geq 2q\geq 0$.

By Lemma \ref{lemma3.9},
the suspension map
$\Sigma_*^l: L_qH_k(\mathcal{C}_{p'}(\mathbb{P}^{n'})) \to L_qH_k(\mathcal{C}_{p'+1}(\mathbb{P}^{n'+1}))$
is surjective. From \cite[Thm.~3]{Lawson1},
the suspension map
$$\Sigma_*^h: H_k(\mathcal{C}_{p'}(\mathbb{P}^{n'},\mathbb{Z}) \to H_k(\mathcal{C}_{p'+1}(\mathbb{P}^{n'+1}),
\mathbb{Z})$$ is an isomorphism.
Thus,  from the commutative diagram in \eqref{eqn3.1},
we obtain that the upper horizontal map $cl$ is surjective
if and only if the lower horizontal map $cl$ is surjective.
Moreover, if the upper horizontal map $cl$ is injective,
then both $\Sigma_*^l$ and the lower horizontal map $cl$ are injective.

After tensoring with rational coefficients in Equation \eqref{eqn3.1},
we have
\begin{equation}\label{eqn3.2}
\vcenter{
\xymatrix{&L_qH_k(\mathcal{C}_{p'}(\mathbb{P}^{n'}))_{\mathbb{Q}}\ar[r]^-{cl_{\mathbb{Q}}}\ar@{->>}[d]^{\Sigma_*^l}
&H_k(\mathcal{C}_{p'}(\mathbb{P}^{n'}),\mathbb{Q})\ar[d]^{\Sigma_*^h}_{\cong}\\
&L_qH_k(\mathcal{C}_{p'+1}(\mathbb{P}^{n'+1}))_{\mathbb{Q}}\ar[r]^-{cl_{\mathbb{Q}}}&H_k(\mathcal{C}_{p'+1}(\mathbb{P}^{n'+1}),\mathbb{Q}).
}
}
\end{equation}

By Lemma \ref{lemma3.6},
the upper horizontal cycle class map
$cl_{\mathbb{Q}}:
L_qH_k(\mathcal{C}_{0}(\mathbb{P}^{n'}))_{\mathbb{Q}}
\to H_k(\mathcal{C}_{0}(\mathbb{P}^{n'}), \mathbb{Q})$
in the commutative diagram \eqref{eqn3.2} is an isomorphism for $p' = 0$.
By induction on $p'$,
both $\Sigma^l: L_qH_k(\mathcal{C}_{p'}(\mathbb{P}^{n'})) \to
L_qH_k(\mathcal{C}_{p'+1}(\mathbb{P}^{n'+1}))$
and
$cl_{\mathbb{Q}}: L_qH_k(\mathcal{C}_{p'}(\mathbb{P}^{n'}))_{\mathbb{Q}} \to
H_k(\mathcal{C}_{p'}(\mathbb{P}^{n'}), \mathbb{Q})$
are isomorphisms for all $0 \leq p' \leq n'$ and $0 \leq 2q \leq k$.
This completes the proof of Theorem \ref{Thm3.1} and Theorem \ref{Thm2.1}.
\end{proof}

\iffalse
\begin{remark}
It should be cautious about the fact that the Lawson homology group is NOT a homotopy invariant.
Therefore the Lawson homology group $L_qH_k(T_{p+1,d}(\mathbb{P}^{n+1}))$ is in general not isomorphic to
$L_qH_k(C_{p,d}(\mathbb{P}^{n}))$,
although $T_{p+1,d}(\mathbb{P}^{n+1})$ is homotopy equivalent to $C_{p,d}(\mathbb{P}^{n})$.
The Lawson homology group $L_qH_k(T_{p+1,d}(\mathbb{P}^{n+1}))$ never appears in the proof of our results.
As a comparison, such a homotopy equivalence is essential in Lawson's proof of the suspension theorem.

Actually, in the proof of Theorem \ref{Thm2.1},  we first show the surjectivity of
$$
\Sigma_*^l:L_qH_k(\mathcal{C}_{p}(\mathbb{P}^{n}))_{\mathbb{Q}}\to L_qH_k(\mathcal{C}_{p+1}(\mathbb{P}^{n+1}))_{\mathbb{Q}},
$$
and then we prove the injectivity of $\Sigma_*^l$ by induction on $p$
and the corresponding isomorphism of singular homology groups,
while the Lawson suspension theorem is indispensable.
\end{remark}
\fi

%%%%%%%%%%%%%%%%%%%%%%%%%##################333333333333333333333333333333333333333333333333
\section{Proof of Theorem \ref{Thm2.2} and Theorem \ref{Thm2.3}}

In the preceding section, we constructed the map
$$
F_{tD}:=(p_1)_* \circ \Psi_{tD} : C_{p+1,d}(\mathbb{P}^{n+1}) \to C_{p+1,de}(\mathbb{P}^{n+1}).
$$
Furthermore, when $ D \notin B_c $,
the image of $ F_{tD} $ lies in the Zariski open subset $ T_{p+1,de}(\mathbb{P}^{n+1}) $.
Such a $ D $ exists provided that the complex codimension of $ B_c $,
given by $ \mathrm{codim}_{\mathbb{C}} B_c \geq \binom{p+e+1}{e} $, is positive.

By Proposition \ref{Prop3.5},
for a continuous map $ f : S^{k-2q} \to \mathcal{Z}_q(C_{p+1,d}(\mathbb{P}^{n+1})) $,
the map $ e \cdot f $ is homotopic to a map
$$
g : S^{k-2q} \to \mathcal{Z}_q(C_{p+1,de}(\mathbb{P}^{n+1}))
$$
such that the support of $ g(s) $
is contained in $ T_{p+1,de}(\mathbb{P}^{n+1}) $ for each $ s \in S^{k-2q} $,
provided $ e $ is a sufficiently large positive integer.
Specifically, $ e \cdot f = \widetilde{F}_{0D}(f) $
is homotopic to the map $ g = \widetilde{F}_{D}(f) $.
Consequently, we obtain the following commutative diagram:
\begin{equation}\label{eqn4.1}
\begin{aligned}
\xymatrix{
& \mathcal{Z}_q(C_{p+1,d}(\mathbb{P}^{n+1})) \ar[r] \ar@{^(->}[d] &
\mathcal{Z}_q(C_{p+1,de}(\mathbb{P}^{n+1})) \ar@{^(->}[d] \\
S^{k-2q} \ar[r]^-{f} & \mathcal{Z}_q(C_{p+1,d}(\mathbb{P}^{n+1})) \ar[r]^-{\widetilde{F}_{tD}}
\ar[ur]^-{\widetilde{F}_{D}} & \mathcal{Z}_q(C_{p+1,de}(\mathbb{P}^{n+1})),
}
\end{aligned}
\end{equation}
where $ \widetilde{F}_{D}=\widetilde{F}_{1D} $.

\begin{lemma}\label{lemma4.1}
For any integer $ e \geq 1 $, the map
$$
\phi_e : C_{0,d}(\mathbb{P}^{n}) \to C_{0,de}(\mathbb{P}^{n}), \quad \phi_e(c) = e \cdot c
$$
induces isomorphisms
$$
\phi_{e*} : L_q H_k(C_{0,d}(\mathbb{P}^{n}))_{\mathbb{Q}} \to L_q
H_k(C_{0,de}(\mathbb{P}^{n}))_{\mathbb{Q}}
$$
for $ 2q \leq k \leq 2d $.
\end{lemma}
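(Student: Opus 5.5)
The plan is to reduce the statement to singular homology with rational coefficients, then compute there using the stable symmetric product. Since $C_{0,d}(\mathbb{P}^n)=\mathrm{SP}^d\mathbb{P}^n$ and $\phi_e$ is a morphism, naturality of the cycle class map gives a commutative square
\begin{equation*}
\xymatrix{
L_qH_k(\mathrm{SP}^d\mathbb{P}^n)_{\mathbb{Q}}\ar[r]^-{cl_{\mathbb{Q}}}\ar[d]^{\phi_{e*}} & H_k(\mathrm{SP}^d\mathbb{P}^n,\mathbb{Q})\ar[d]^{\phi_{e*}}\\
L_qH_k(\mathrm{SP}^{de}\mathbb{P}^n)_{\mathbb{Q}}\ar[r]^-{cl_{\mathbb{Q}}} & H_k(\mathrm{SP}^{de}\mathbb{P}^n,\mathbb{Q}).
}
\end{equation*}
By Lemma \ref{lemma3.6}, both horizontal maps are isomorphisms for all $k\geq 2q\geq 0$. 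So it suffices to prove that $\phi_e$ induces an isomorphism on $H_k(-,\mathbb{Q})$ for $k\le 2d$.

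\textbf{Step 1: stabilize.} Let $j_m:\mathrm{SP}^m\mathbb{P}^n\to \mathrm{SP}^\infty\mathbb{P}^n=\mathcal{C}_0(\mathbb{P}^n)$ denote the inclusion obtained by adding copies of the base point $l_0$. By Steenrod's splitting (or \cite[Thm.~1]{Hu-2015} with $p=0$), $j_{m*}$ is an isomorphism on $H_k(-,\mathbb{Q})$ for $k\le 2m$. Rationally this is visible directly: $H_*(\mathrm{SP}^m X,\mathbb{Q})=\bigoplus_{j\le m}\mathrm{Sym}^j\tilde H_*(X,\mathbb{Q})$, and any summand with $j>m$ has degree at least $2(m+1)$. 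Since $de\ge d$, both $j_d$ and $j_{de}$ are isomorphisms in degrees $k\le 2d$.

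\textbf{Step 2: identify $\phi_e$ after stabilization.} The maps $j_{de}\circ\phi_e$ and $\mu_e\circ j_d$ differ only by adding a fixed number of copies of the base point, where $\mu_e:\mathcal{C}_0\to\mathcal{C}_0$ is multiplication by $e$ in the monoid. Because $\mathrm{SP}^N\mathbb{P}^n$ is path connected, adding a fixed point is homotopic to adding $l_0$'s. Hence the two maps are homotopic and agree on homology, and it remains to show that $\mu_{e*}$ is an isomorphism on $H_*(\mathcal{C}_0(\mathbb{P}^n),\mathbb{Q})$.

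\textbf{Step 3: compute $\mu_{e*}$.} By Dold--Thom, $\mathcal{C}_0(\mathbb{P}^n)\to\mathcal{Z}_0(\mathbb{P}^n)$ is a homotopy equivalence onto the degree component, and this component is a product $\prod_{i=1}^n K(\mathbb{Z},2i)$. Multiplication by $e$ on the topological abelian group $\mathcal{Z}_0$ induces multiplication by $e$ on every homotopy group, so it is a rational homotopy equivalence. Concretely, $H_*(\mathcal{C}_0,\mathbb{Q})$ is the symmetric algebra on the rational homotopy, and $\mu_{e*}$ acts on $\mathrm{Sym}^j$ by $e^j\ne 0$. Thus $\mu_{e*}$ is an isomorphism, and by Steps 1 and 2 the right-hand $\phi_{e*}$ is an isomorphism for $k\le 2d$. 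The square then gives the lemma.

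The main obstacle is the compatibility in Step 2: one must check carefully that $\phi_e$ commutes up to homotopy with the stabilization maps and with the monoid multiplication on the Friedlander completion. This uses path-connectedness and the continuity of the addition map; the rest of the argument is standard.
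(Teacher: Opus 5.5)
Your proof is correct and follows the paper's route: naturality of $cl_{\mathbb{Q}}$ together with Lemma~\ref{lemma3.6} reduces the claim to showing that $\phi_{e*}$ is an isomorphism on $H_k(-,\mathbb{Q})$ for $k\le 2d$. The paper simply cites \cite[Lemma~9, Remark~10]{Hu-2015} for that singular-homology fact, whereas you prove it directly via $\mathrm{SP}^\infty\mathbb{P}^n$ and the rational equivalence $\mu_e$. Your ``main obstacle'' in Step~2 is in fact vacuous: since $[c]=[c+l_0]$ in the completion, $j_{de}\circ\phi_e=\mu_e\circ j_d$ holds on the nose.
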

\begin{proof}
We first note that $ C_{0,d}(\mathbb{P}^n) \cong \mathrm{SP}^d(\mathbb{P}^n) $,
where $\mathrm{SP}^d(\mathbb{P}^n) $
denotes the $ d $-th symmetric product of $ \mathbb{P}^n $.
By Lemma \ref{lemma3.6}, we have
$$
L_q H_k(C_{0,d}(\mathbb{P}^n))_{\mathbb{Q}} \cong H_k(C_{0,d}(\mathbb{P}^n))_{\mathbb{Q}}
$$
for all $ d, n $, and $ k \geq 2q \geq 0$.
Furthermore, by Lemma~9 and Remark~10 in~\cite{Hu-2015},
we have
$$
H_k(C_{0,d}(\mathbb{P}^n))_{\mathbb{Q}} \cong H_k(C_{0,de}(\mathbb{P}^n))_{\mathbb{Q}}
$$
for $0\leq 2q \leq k \leq 2d $. This completes the proof of the lemma.
\end{proof}

\begin{remark}
We expect that the maps
$$
\phi_{e*} : L_q H_k(C_{0,d}(\mathbb{P}^{n})) \to L_q H_k(C_{0,de}(\mathbb{P}^{n}))
$$
are isomorphisms for $ 0\leq 2q \leq k \leq 2d $, under the assumption of Lemma~\ref{lemma4.1}.
\end{remark}

Note that there is a commutative diagram
\begin{equation}\label{eqn4.2}
\vcenter{
\xymatrix{
C_{p,d}(\mathbb{P}^{n}) \ar[r]^-{\Phi_{p,d,n,e}} \ar[d]^-{\Sigma}
        & C_{p,de}(\mathbb{P}^{n}) \ar[d]^-{\Sigma} \\
T_{p+1,d}(\mathbb{P}^{n+1}) \ar[r]^-{F_{tD}} & T_{p+1,de}(\mathbb{P}^{n+1}),
}}
\end{equation}
where $ \Phi_{p,d,n,e} $ (with $ \Phi_{0,d,n,e} = \phi_e $
in Lemma~\ref{lemma4.1}) is the composite map
$$
\begin{array}{ccccc}
C_{p,d}(\mathbb{P}^{n}) & \to & C_{p,d}(\mathbb{P}^{n}) \times \cdots \times C_{p,d}(\mathbb{P}^{n}) & \to & C_{p,de}(\mathbb{P}^{n}) \\
c & \mapsto & (c, \ldots, c) & \mapsto & e \cdot c
\end{array}
$$
and for $ D \in \widetilde{\mathrm{Div}}_e(\mathbb{P}^{n+2}) $,
the map $ F_{tD} $ in~\eqref{eqn4.2} is the restriction of
$$
F_{tD} : C_{p+1,d}(\mathbb{P}^{n+1}) \to C_{p+1,de}(\mathbb{P}^{n+1})
$$
to $ T_{p+1,d}(\mathbb{P}^{n+1}) $,
since its image under this restriction lies in $ T_{p+1,de}(\mathbb{P}^{n+1}) $
(see~\cite[Lem.~5.5]{Lawson1}).

\begin{proposition}\label{Prop4.3}
For integers $ p, d, n $ such that $ 0 \leq p \leq n $ and $ d \geq 0 $,
there exists an integer $ e_{p,d,n} \geq 1 $
such that for all $ e \geq e_{p,d,n} $,
the induced map
$$
(\Phi_{p+1,d,n+1,e})_* : L_q H_k(C_{p+1,d}(\mathbb{P}^{n+1}))_{\mathbb{Q}} \to L_q H_k(C_{p+1,de}(\mathbb{P}^{n+1}))_{\mathbb{Q}}
$$
on Lawson homology groups with rational coefficients, induced by
$$
\Phi_{p+1,d,n+1,e} : C_{p+1,d}(\mathbb{P}^{n+1}) \to C_{p+1,de}(\mathbb{P}^{n+1}), \quad c \mapsto e \cdot c,
$$
is an isomorphism for $ 0 \leq 2q \leq k \leq 2d $.
Furthermore, we have the following commutative diagram of isomorphisms:
\begin{equation}\label{eqn4.3}
\vcenter{
\xymatrix{
L_q H_k(C_{p+1,d}(\mathbb{P}^{n+1}))_{\mathbb{Q}} \ar[r]^-{cl_{\mathbb{Q}}}_-{\cong} \ar[d]^-{(\Phi_{p+1,d,n+1,e})_*}_-{\cong} & H_k(C_{p+1,d}(\mathbb{P}^{n+1}), \mathbb{Q}) \ar[d]^-{(\Phi_{p+1,d,n+1,e})_*}_-{\cong} \\
L_q H_k(C_{p+1,de}(\mathbb{P}^{n+1}))_{\mathbb{Q}} \ar[r]^-{cl_{\mathbb{Q}}}_-{\cong} & H_k(C_{p+1,de}(\mathbb{P}^{n+1}), \mathbb{Q})
}}
\end{equation}
for $ 0 \leq 2q \leq k \leq 2d $.
\end{proposition}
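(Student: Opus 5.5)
We argue by induction on $p$, the base case being the $0$-cycle case of Lemma \ref{lemma4.1}. Assume the statement holds for $\Phi_{p,d,n,e}$ on $C_{p,d}(\mathbb{P}^n)$: both horizontal cycle class maps are isomorphisms in the range $0\leq 2q\leq k\leq 2d$, and so is $(\Phi_{p,d,n,e})_*$. We transfer this to level $p+1$ in three steps, following the pattern used for Theorem \ref{Thm2.1}.

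\emph{Step 1: the right-hand vertical map in \eqref{eqn4.3}.} On singular homology, $c\mapsto e\cdot c$ and the embedding $c\mapsto c+(e-1)d\,l_0$ are the two ways of passing from degree $d$ to degree $de$. By \cite{Hu-2015} the embedding is $2d$-connected. Lawson's $\mathbb{C}^*$-action and deformation argument (the diagram \eqref{eqn4.2} together with Proposition \ref{Prop3.2}) shows that these two maps agree rationally on $H_k$ for $k\leq 2d$. We therefore take $(\Phi_{p+1,d,n+1,e})_*$ on $H_k(-,\mathbb{Q})$ to be an isomorphism, and use \cite[Cor.~5]{Hu-2015} to identify both groups with $H_k(\prod_{i=1}^{n-p}K(\mathbb{Z},2i),\mathbb{Q})$.

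\emph{Step 2: the cycle class maps at level $p+1$.} Every class in $L_qH_k(C_{p+1,d}(\mathbb{P}^{n+1}))$ is represented by some $f:S^{k-2q}\to\mathcal{Z}_q(C_{p+1,d}(\mathbb{P}^{n+1}))$. Choose $e\geq e_{p,d,n}$ large enough that $2\binom{p+e+1}{e}>k+1$ for every $k\leq 2d$; this choice is what defines $e_{p,d,n}$. Proposition \ref{Prop3.5} then gives $e\cdot f\simeq \widetilde F_D\circ f$ with support in $T_{p+1,de}$, and Proposition \ref{Prop3.2} deforms this further into a map supported in $\Sigma C_{p,de}(\mathbb{P}^n)$. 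Hence the image of $(\Phi_{p+1,d,n+1,e})_*$ in Lawson homology is contained in the image of $\Sigma_*$ from $L_qH_k(C_{p,de}(\mathbb{P}^n))_{\mathbb{Q}}$. The induction hypothesis and \eqref{eqn4.2} then give a commutative square: the suspension on the top, $\Phi$ on the sides, and the isomorphism $\Sigma^h_*$ on singular homology (Lawson's suspension theorem in the range $k\leq 2d$). A diagram chase in this square, exactly as in the proof of Theorem \ref{Thm2.1}, gives surjectivity of the lower $cl_{\mathbb{Q}}$ onto the image. We also get injectivity of $cl_{\mathbb{Q}}\circ(\Phi_{p+1,d,n+1,e})_*$: if $cl_{\mathbb{Q}}$ kills $e[f]$, then the class lifts to level $p$, where $cl_{\mathbb{Q}}$ is injective, so $e[f]=0$ rationally.

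\emph{Step 3: conclusion.} Steps 1 and 2 show that $cl_{\mathbb{Q}}\circ(\Phi_{p+1,d,n+1,e})_* = (\Phi_{p+1,d,n+1,e})_*\circ cl_{\mathbb{Q}}$ is injective. So the upper $cl_{\mathbb{Q}}$ is injective, and by the Step 1 identification the lower $cl_{\mathbb{Q}}$ is surjective. To get the remaining halves, we apply the same argument to $C_{p+1,de}$, which gives injectivity of the lower $cl_{\mathbb{Q}}$. Surjectivity of the upper $cl_{\mathbb{Q}}$ comes from lifting singular classes: by Step 1 every class in $H_k(C_{p+1,d}(\mathbb{P}^{n+1}),\mathbb{Q})$ with $k\leq 2d$ is a suspension of a class at level $p$, which is algebraic by the induction hypothesis. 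All four maps in \eqref{eqn4.3} are then isomorphisms, and the left vertical map is one because the other three are.

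The main obstacle is Step 2. The deformations of Propositions \ref{Prop3.5} and \ref{Prop3.2} land in degree $de$, not $d$, so we must keep track of which degree each square lives in. We also need the induction hypothesis at level $p$ in degree $de$, where the valid range $k\leq 2de$ contains $k\leq 2d$, so this causes no loss. Finally, we must check that the $B(f)$ codimension estimate is uniform over all $k\leq 2d$, which is what forces the threshold $e_{p,d,n}$.
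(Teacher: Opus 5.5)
\textbf{There is a genuine gap: the injectivity half, which is the real content of the proposition.}

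What your Step~2 legitimately gives is two facts. First, $\mathrm{Im}(\Phi_{p+1,d,n+1,e})_*\subseteq \mathrm{Im}\,\Sigma_*$, by Propositions~\ref{Prop3.5} and~\ref{Prop3.2}. Second, $cl_{\mathbb{Q}}$ is injective on $\mathrm{Im}\,\Sigma_*\subseteq L_qH_k(C_{p+1,de}(\mathbb{P}^{n+1}))_{\mathbb{Q}}$, by the induction hypothesis at degree $de$ and Lawson's suspension theorem. So if $cl_{\mathbb{Q}}$ kills $(\Phi_{p+1,d,n+1,e})_*[f]$, you get $(\Phi_{p+1,d,n+1,e})_*[f]=0$, and nothing more.

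Passing from this to $[f]=0$ treats $(\Phi_{p+1,d,n+1,e})_*[f]$ as ``$e$ times $[f]$'', which it is not. The map $(\Phi_{p+1,d,n+1,e})_*$ is proper push-forward of $q$-cycles along the injective morphism $c\mapsto e\cdot c$ into a different variety. It sends a subvariety $V\subset C_{p+1,d}(\mathbb{P}^{n+1})$ to $\Phi_{p+1,d,n+1,e}(V)$ with multiplicity one. Lawson's identity $[e\cdot f]=e[f]$ holds in $\pi_*\mathcal{Z}_{p+1}(\mathbb{P}^{n+1})$, where $c\mapsto e\cdot c$ is the $e$-fold sum in a topological group. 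The group $\pi_{k-2q}\mathcal{Z}_q(C_{p+1,d}(\mathbb{P}^{n+1}))$ has no such structure.

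In fact, your ingredients, together with commutativity of \eqref{eqn4.3} and the right vertical isomorphism, prove exactly that $\ker(\Phi_{p+1,d,n+1,e})_*=\ker cl_{\mathbb{Q}}$ in $L_qH_k(C_{p+1,d}(\mathbb{P}^{n+1}))_{\mathbb{Q}}$. So injectivity of the left vertical map and injectivity of the upper $cl_{\mathbb{Q}}$ are equivalent, and Step~3 obtains each from the other. ``Apply the same argument to $C_{p+1,de}$'' has the same defect.

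What is missing is an independent reason for this common kernel to vanish. Possibilities would be a correspondence giving a left inverse of $(\Phi_{p+1,d,n+1,e})_*$ up to a nonzero scalar, or a way to move every class of $L_qH_k(C_{p+1,d}(\mathbb{P}^{n+1}))_{\mathbb{Q}}$ into $\mathrm{Im}\,\Sigma_*$ without multiplying the degree. Note that the paper's proof obtains injectivity of $(\widetilde{F}_{0D})_*$ from the same identification $e\cdot[f]=[e\cdot f]=(\widetilde{F}_{0D})_*([f])$. Following it therefore does not close this step either.

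\textbf{Two smaller points.}

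Step~1's justification is false as stated: $\Phi_{p+1,d,n+1,e}$ and $c\mapsto c+(e-1)d\,l_0$ do not agree on $H_k(-,\mathbb{Q})$. Identify both sides with $H_k(\mathcal{Z}_{p+1}(\mathbb{P}^{n+1})_0,\mathbb{Q})$ via $c\mapsto c-d\,l_0$ and $c'\mapsto c'-de\,l_0$. Then the first map is induced by multiplication by $e$ in this topological group, which acts by $e$ on $H_2\cong\mathbb{Q}$ when $p<n$. The second map is the identity. Both are rational isomorphisms in the range, which is all you use, so simply cite \cite{Hu-2015} as the paper does.

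On the positive side, your surjectivity arguments are sound and more direct than the paper's. Use the square $cl_{\mathbb{Q}}\circ\Sigma_*=\Sigma^h_*\circ cl_{\mathbb{Q}}$ at a fixed degree, with the induction hypothesis and Lawson's $2d$-connectivity of $\Sigma$. This gives surjectivity of $cl_{\mathbb{Q}}$ on $C_{p+1,d}(\mathbb{P}^{n+1})$ and on $C_{p+1,de}(\mathbb{P}^{n+1})$, without passing to $e\to\infty$ and invoking Theorem~\ref{Thm2.1}.
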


\begin{proof}
We prove the isomorphism property of $ (\Phi_{p+1,d,n+1,e})_* $ by induction on $ p $.
The base case $ p = -1 $ follows from Lemma~\ref{lemma4.1}.
Assume that the map
$$
\Phi_{p,d,n,e} : C_{p,d}(\mathbb{P}^{n}) \to C_{p,de}(\mathbb{P}^{n}),
\quad \Phi_{p,d,n,e}(c) = e \cdot c
$$
induces isomorphisms
$$
(\Phi_{p,d,n,e})_* : L_q H_k(C_{p,d}(\mathbb{P}^{n}))_{\mathbb{Q}}
\to L_q H_k(C_{p,de}(\mathbb{P}^{n}))_{\mathbb{Q}}
$$
for $ 0 \leq 2q \leq k \leq 2d $ and $ e \geq e_{p,d,n} $.
%By induction, both $\pi_k(C_{p,d}(\mathbb{P}^{n}))$ and $\pi_k(C_{p,de}(\mathbb{P}^{n}))$ are isomorphic to $\mathbb{Z}$ for $k\leq 2d$ and %$(\Phi_{p,d,n,e})_*:\pi_k(C_{p,d}(\mathbb{P}^{n}))\to \pi_k(C_{p,de}(\mathbb{P}^{n}))$ is  multiplication by $e:\mathbb{Z}\to \mathbb{Z}$.
\iffalse
Moreover, by induction we have the following commutative diagram
\begin{equation*}
\xymatrix{&\Ch_{q}(C_{p,d}(\mathbb{P}^{n}))_{\mathbb{Q}}\ar[r]^-{cl_{\mathbb{Q}}}_{\cong}\ar[d]^{(\Phi_{p,d,n,e})_*}_{\cong}
&H_k(C_{p,d}(\mathbb{P}^{n}),\mathbb{Q})\ar[d]^{(\Phi_{p,d,n,e})_*}_{\cong}\\
&L_qH_k(C_{p,de}(\mathbb{P}^{n}))_{\mathbb{Q}}\ar[r]^-{cl_{\mathbb{Q}}}_{\cong}&H_k(C_{p,de}(\mathbb{P}^{n}),\mathbb{Q})
}
\end{equation*}
for $0\leq 2q \leq k\leq 2d$ and $e\geq e_{p,d,n}$.
\fi

By Equations~\eqref{eqn4.1} and~\eqref{eqn4.2},
we have the following commutative diagram:
\begin{equation*}
\xymatrix{
& \mathcal{Z}_q(C_{p,d}(\mathbb{P}^{n})) \ar[r]^-{(\Phi_{p,d,n,e})_*} \ar[d]^-{\Sigma_*}
          & \mathcal{Z}_q(C_{p,de}(\mathbb{P}^{n})) \ar[d]^-{\Sigma_*} \\
& \mathcal{Z}_q(C_{p+1,d}(\mathbb{P}^{n+1})) \ar[r] \ar@{^(->}[d]
        & \mathcal{Z}_q(C_{p+1,de}(\mathbb{P}^{n+1})) \ar@{^(->}[d] \\
S^{k-2q} \ar[r]^-{f}
& \mathcal{Z}_q(C_{p+1,d}(\mathbb{P}^{n+1})) \ar[r]^-{\widetilde{F}_{0D}} \ar[ur]^-{\widetilde{F}_{D}}
              & \mathcal{Z}_q(C_{p+1,de}(\mathbb{P}^{n+1})).
}
\end{equation*}
Consequently, we obtain the following commutative diagram on Lawson homology groups:
\begin{equation}\label{eqn4.4}
\vcenter{
\xymatrix{
& L_q H_k(C_{p,d}(\mathbb{P}^{n}))_{\mathbb{Q}} \ar[r]^-{(\Phi_{p,d,n,e})_*} \ar[d]^-{\Sigma_*}
       & L_q H_k(C_{p,de}(\mathbb{P}^{n}))_{\mathbb{Q}} \ar[d]^-{\Sigma_*} \\
& L_q H_k(C_{p+1,d}(\mathbb{P}^{n+1}))_{\mathbb{Q}} \ar[r]^-{(F_{0D})_*}
                  & L_q H_k(C_{p+1,de}(\mathbb{P}^{n+1}))_{\mathbb{Q}}.
}}
\end{equation}
By induction, the upper horizontal map $ (\Phi_{p,d,n,e})_* $ is an isomorphism.
Moreover, by Theorem~\ref{Thm3.1}, the map
$$
\Sigma_* : L_q H_k(\mathcal{C}_{p}(\mathbb{P}^{n}))_{\mathbb{Q}}
\to L_q H_k(\mathcal{C}_{p+1}(\mathbb{P}^{n+1}))_{\mathbb{Q}}
$$
is an isomorphism for $ 0 \leq 2q \leq k \leq 2d $. Thus, the map
$$
\Sigma_* : L_q H_k(C_{p,de}(\mathbb{P}^{n}))_{\mathbb{Q}}
\to L_q H_k(C_{p+1,de}(\mathbb{P}^{n+1}))_{\mathbb{Q}}
$$
is an isomorphism for sufficiently large $ e $.
Hence, the left vertical map $ \Sigma_* $ is injective,
and the lower horizontal map $ (F_{0D})_* $ is surjective.

\iffalse
In fact, we can prove the surjective property of $(F_{0D})_*$ directly.
For $\alpha\in L_qH_k(C_{p+1,d}(\mathbb{P}^{n+1}))_{\mathbb{Q}}$,
it is linear combination of the class of (irreducible) subvariety of dimension $q$
on $C_{p+1,d}(\mathbb{P}^{n+1})$.
It is enough to consider that $\alpha$ is the cycle class of a subvariety $V$.
Let $\alpha\in L_qH_k(C_{p+1,d}(\mathbb{P}^{n+1}))_{\mathbb{Q}}$ be an element such that $\alpha=[V]$ is the cycle class
of a subvariety $V$ of dimension $q$ on $C_{p+1,d}(\mathbb{P}^{n+1})$.
Let $i_V:V\to C_{p+1,d}(\mathbb{P}^{n+1})$ be the inclusion.
To prove that $(F_{0D})_*$ is surjective,
we need to show that for large $e$,
there is a cycle $V'$ on $C_{p,de}(\mathbb{P}^{n})$
such that $\Sigma_*([V'])=(F_{0D})_*([V])\in L_qH_k(C_{p+1,de}(\mathbb{P}^{n+1}))$.
Indeed,
by Proposition \ref{Prop3.5},
$F_{0D}(V)$ is rationally equivalent to
$F_{D}(V)\subset T_{p+1,de}(\mathbb{P}^{n+1})$ on $C_{p+1,de}(\mathbb{P}^{n+1})$ for $e\geq e_{p+1,d,n+1}$.
Meanwhile,
$F_{D}(V)$  is rationally equivalent
to an algebraic cycle $\Sigma V'\subset \Sigma C_{p,de}(\mathbb{P}^{n})$ on $C_{p+1,de}(\mathbb{P}^{n+1})$.
\fi

Since
$e\cdot [f]=[e\cdot f]=[\widetilde{F}_{0D}(f)]
=(\widetilde{F}_{0D})_*([f])\in L_q H_k(C_{p+1,de}(\mathbb{P}^{n+1})) $,
we conclude that $ (\widetilde{F}_{0D})_* $
is injective for sufficiently large $ e $ (see Notes~5.1 in~\cite{Lawson1} for homotopy groups).
Therefore, for sufficiently large $ e $, the map
$$
(\Phi_{p+1,d,n+1,e})_* = (\widetilde{F}_{0D})_* : L_q H_k(C_{p+1,d}(\mathbb{P}^{n+1}))_{\mathbb{Q}} \to L_q H_k(C_{p+1,de}(\mathbb{P}^{n+1}))_{\mathbb{Q}}
$$
is an isomorphism for $ 0 \leq 2q \leq k \leq 2d $.

We have shown that the left vertical map in the following commutative diagram
\begin{equation*}
\xymatrix{
L_q H_k(C_{p+1,d}(\mathbb{P}^{n+1}))_{\mathbb{Q}} \ar[r]^-{cl_{\mathbb{Q}}}
\ar[d]^-{(\Phi_{p+1,d,n+1,e})_*}_-{\cong}
     & H_k(C_{p+1,d}(\mathbb{P}^{n+1}), \mathbb{Q}) \ar[d]^-{(\Phi_{p+1,d,n+1,e})_*}_-{\cong} \\
L_q H_k(C_{p+1,de}(\mathbb{P}^{n+1}))_{\mathbb{Q}} \ar[r]^-{cl_{\mathbb{Q}}}
         & H_k(C_{p+1,de}(\mathbb{P}^{n+1}), \mathbb{Q})
}
\end{equation*}
is an isomorphism for $ 0 \leq 2q \leq k \leq 2d $,
while the right vertical map is also an isomorphism (see~\cite{Hu-2015}).
As $ e \to \infty $, we obtain the following commutative diagram:
\begin{equation}\label{eqn4.5}
\vcenter{
\xymatrix{
L_q H_k(C_{p+1,d}(\mathbb{P}^{n+1}))_{\mathbb{Q}} \ar[r]^-{cl_{\mathbb{Q}}} \ar[d]_-{\cong}
          & H_k(C_{p+1,d}(\mathbb{P}^{n+1}), \mathbb{Q}) \ar[d]_-{\cong} \\
L_q H_k(\mathcal{C}_{p+1}(\mathbb{P}^{n+1}))_{\mathbb{Q}} \ar[r]^-{cl_{\mathbb{Q}}}
               & H_k(\mathcal{C}_{p+1}(\mathbb{P}^{n+1}), \mathbb{Q})
}}
\end{equation}
for $ 0 \leq 2q \leq k \leq 2d $.
By Theorem~\ref{Thm2.1}, the lower horizontal map in Equation~\eqref{eqn4.5} is an isomorphism.
Therefore, the upper horizontal map is an isomorphism for $ 0 \leq 2q \leq k \leq 2d $.
This completes the proof of the proposition.
\end{proof}

\begin{proof}[Proof of Theorem \ref{Thm2.2}]
By Proposition~\ref{Prop4.3}, we have isomorphisms
$$
(\Phi_{p+1,d,n+1,e})_* : L_q H_k(C_{p+1,d}(\mathbb{P}^{n+1}))_{\mathbb{Q}}
\to L_q H_k(C_{p+1,de}(\mathbb{P}^{n+1}))_{\mathbb{Q}}
$$
for $ 0 \leq 2q \leq k \leq 2d $, $ 0 \leq p \leq n $,
and sufficiently large $ e $. As $ e \to \infty $,
we obtain isomorphisms
$$
L_q H_k(C_{p+1,d}(\mathbb{P}^{n+1}))_{\mathbb{Q}} \to L_q H_k(\mathcal{C}_{p+1}(\mathbb{P}^{n+1}))_{\mathbb{Q}}
$$
for $ 0 \leq 2q \leq k \leq 2d $, $ 0 \leq p \leq n $.
Since the inclusion $i:C_{p+1,d}(\mathbb{P}^{n+1})\to C_{p+1,d+1}(\mathbb{P}^{n+1})$
commutes with $ \Phi_{p+1,d,n+1,e} $,
we obtain the following commutative diagram of Lawson homology groups:
\begin{equation*}
\xymatrix{
L_q H_k(C_{p,d}(\mathbb{P}^{n}))_{\mathbb{Q}} \ar[r]^-{\cong} \ar[d]^-{i_*}
        & L_q H_k(\mathcal{C}_{p}(\mathbb{P}^{n}))_{\mathbb{Q}} \\
L_q H_k(C_{p,d+1}(\mathbb{P}^{n}))_{\mathbb{Q}} \ar[ru]^-{\cong}
            &
}
\end{equation*}
for $ 0 \leq 2q \leq k \leq 2d $ and $ 1 \leq p \leq n $.
This shows that the maps
$$
i_* : L_q H_k(C_{p,d}(\mathbb{P}^{n}))_{\mathbb{Q}}
     \to L_q H_k(C_{p,d+1}(\mathbb{P}^{n}))_{\mathbb{Q}}
$$
are isomorphisms for $ 0 \leq 2q \leq k \leq 2d $ and $ 1 \leq p \leq n $.

For $ p = 0 $, the map
$$
i_* : L_q H_k(C_{0,d}(\mathbb{P}^{n}))_{\mathbb{Q}} \to L_q H_k(C_{0,d+1}(\mathbb{P}^{n}))_{\mathbb{Q}}
$$
is an isomorphism for $ 0 \leq 2q \leq k \leq 2d $,
since, by Lemma~\ref{lemma3.6},
the cycle class map is a natural isomorphism and the map
$$
i_* : H_k(C_{0,d}(\mathbb{P}^{n}))_{\mathbb{Q}} \to H_k(C_{0,d+1}(\mathbb{P}^{n}), \mathbb{Q})
$$
is also an isomorphism for $ 0 \leq 2q \leq k \leq 2d $.
This completes the proof of the theorem.
\end{proof}

\begin{proof}[Proof of Theorem \ref{Thm2.3}]
In the commutative diagram~\eqref{eqn4.4},
we conclude that the left vertical map
$$
\Sigma_* : L_q H_k(C_{p,d}(\mathbb{P}^{n}))_{\mathbb{Q}}
\to L_q H_k(C_{p+1,d}(\mathbb{P}^{n+1}))_{\mathbb{Q}}
$$
is an isomorphism for $ 0 \leq 2q \leq k \leq 2d $,
since all other three maps are isomorphisms, as shown in the proof of Proposition~\ref{Prop4.3}.
This completes the proof of Theorem~\ref{Thm2.3}.
\end{proof}

\begin{proof}[Proof of Corollary \ref{cor2.4} ]

It follows immediately
from the upper horizontal isomorphism in Equation~\eqref{eqn4.3} of Proposition~\ref{Prop4.3}.
\end{proof}

%%%%%%%%%%%%%%%%%%%%%%%%%%%%%%%%%%%%%%%%%%%%%%%%%%%%%%%%%%%%%%%%%%%%%
%%%%%
%%%%%%%%%%%%%%%%%%%%%%%%%%%%%%%%%%%%%%%%%%%%%%%%%%%%%%%%%%%%%%%%%%%%%

\section{Application to cycles of fixed degree}
In this section, we apply our result to the space of all algebraic cycles of a fixed degree.
Let $d\geq 1$ be a fixed integer.
Consider the spaces
\begin{equation}\label{eqn5.1}
\mathcal{D}(d):=\lim_{p,q\to \infty} C_{p,d}(\mathbb{P}^{p+q})
\end{equation}
of cycles of a fixed degree (with arbitrary dimension and codimension),
as introduced in \cite{Lawson-Michelsohn},
where  the limit for $p$ is given by suspension $\Sigma:C_{p,d}(\mathbb{P}^n)\to C_{p+1,d}(\mathbb{P}^{n+1}) $
and the limit for $q$ is induced by the inclusion $\mathbb{P}^{p+q}\subset \mathbb{P}^{p+q+1}$,
i.e. a $p$-cycle in $C_{p,d}(\mathbb{P}^{p+q})$ is viewed as a $p$-cycle in $C_{p,d}(\mathbb{P}^{p+q+1})$.

Then there is a filtration (see \cite[\S7]{Lawson-Michelsohn})
$$
BU=\mathcal{D}(1)\subset\mathcal{D}(2)\subset\cdots\subset \mathcal{D}(\infty),
$$
where $BU=\lim\limits_{q\to \infty}BU_q$ and
$\mathcal{D}(\infty)$ is homotopic to $K(\mathbb{Z},even)=\prod_{i=1}^{\infty} K(\mathbb{Z},2i)$
 (the weak product of Eilenberg--Maclane spaces).

The inclusion map $\mathcal{D}(d)\subset \mathcal{D}(\infty)$ induces homomorphisms on both homology and homotopy groups. Lawson and Michelsohn proved that the homomorphism on homotopy groups induced by $\mathcal{D}(1)\subset \mathcal{D}(\infty)$ is injective, and as abstract groups, $\pi_*(\mathcal{D}(1))\cong \pi_*(\mathcal{D}(\infty))$. They then posed the question of whether the induced homomorphism
$$
\pi_k(\mathcal{D}(d))\to \pi_k(\mathcal{D}(\infty))
$$
is always injective for all $k\geq 0$ and $d\geq 1$ (see \cite{Lawson-Michelsohn}).

This section addresses a Lawson homology group version of a question originally posed by Lawson and Michelsohn.
The inclusion map $\mathcal{D}(d)\subset \mathcal{D}(\infty)$ naturally induces maps on Lawson homology groups,
where the Lawson homology group of $\mathcal{D}(d)$ is defined as the direct limit of the Lawson homology groups of the Chow varieties:
$$
L_qH_k(\mathcal{D}(d)):=\lim\limits_{p,n\to \infty}L_qH_k(C_{p,d}(\mathbb{P}^n)).
$$
Note that for the singular homology group of $\mathcal{D}(d)$, there exist isomorphisms
$$
H_{m}(\mathcal{D}(d)):=\lim_{p,n\to \infty}H_{m}(C_{p,d}(\mathbb{P}^n))
$$
for all $1\leq d\leq \infty$.

\begin{question}\label{ques1}
Is the induced homomorphism $L_qH_k(\mathcal{D}(d))\to L_qH_k(\mathcal{D}(\infty))$ injective for all $d\geq 1$ and $k\geq 2q\geq 0$?
\end{question}

As a corollary of Theorem \ref{Thm2.2},
for $d=1$, the answer to Question \ref{ques1} is affirmative. 
More precisely, we have the following results.
\begin{proposition}
The induced homomorphism $L_qH_k(\mathcal{D}(1))\to L_qH_k(\mathcal{D}(\infty))$
by the inclusion map $\mathcal{D}(1)\subset \mathcal{D}(\infty)$ is an injective map,
and is an surjective map with rational coefficients.
\end{proposition}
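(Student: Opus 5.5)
The plan is to identify $\mathcal{D}(1)$ concretely and reduce to linear subspaces, that is, to Grassmannians. Since $C_{p,1}(\mathbb{P}^{p+q})$ is the Grassmannian $G(p+1,p+q+1)$, we have $L_qH_k(\mathcal{D}(1))=\lim L_qH_k(G(p+1,n+1))$. Grassmannians are cellular varieties, so their Lawson homology is known. By Lima-Filho / Friedlander's computation for varieties with an algebraic cell decomposition, the cycle class map
$$cl:L_qH_k(G)\to H_k(G,\mathbb{Z})$$
is an isomorphism for $k\geq 2q$. Passing to the limit over suspensions and inclusions then gives $L_qH_k(\mathcal{D}(1))\cong H_k(\mathcal{D}(1),\mathbb{Z})=H_k(BU,\mathbb{Z})$, compatibly with cycle class maps.

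For injectivity, I will use the commutative square with the cycle class maps $cl$ for $\mathcal{D}(1)$ and $\mathcal{D}(\infty)$ and the inclusion maps on $L_qH_k$ and on $H_k$. It suffices to show that the composite $L_qH_k(\mathcal{D}(1))\to H_k(\mathcal{D}(\infty),\mathbb{Z})$ is injective. Since the left $cl$ is an isomorphism, this reduces to injectivity of $H_k(BU,\mathbb{Z})\to H_k(\mathcal{D}(\infty),\mathbb{Z})$. Lawson--Michelsohn identify $BU=\mathcal{D}(1)\to\mathcal{D}(\infty)\simeq\prod K(\mathbb{Z},2i)$ with the total Chern class map, which is a rational equivalence. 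That alone does not give injectivity on integral homology, and this is the main obstacle. I would first try to exploit that $H_*(BU,\mathbb{Z})$ is torsion free. Then injectivity follows from rational injectivity: a torsion-free group embeds in its rationalization, and $H_*(BU,\mathbb{Q})\to H_*(\mathcal{D}(\infty),\mathbb{Q})$ is an isomorphism. So the composite is injective and hence so is the top map.

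For rational surjectivity, I would again use the square after tensoring with $\mathbb{Q}$. For $\mathcal{D}(\infty)$, each finite stage $C_{p,d}(\mathbb{P}^n)$ has $cl_{\mathbb{Q}}$ an isomorphism for $k\leq 2d$ by Corollary \ref{cor2.4}. Given fixed $k$, choosing $d\geq k/2$ and passing to the limit shows $cl_{\mathbb{Q}}:L_qH_k(\mathcal{D}(\infty))_{\mathbb{Q}}\to H_k(\mathcal{D}(\infty),\mathbb{Q})$ is an isomorphism; this uses Theorems \ref{Thm2.2} and \ref{Thm2.3} for compatibility of the limits. The left $cl_{\mathbb{Q}}$ is an isomorphism by the cellular computation, and the right vertical map $H_k(BU,\mathbb{Q})\to H_k(\mathcal{D}(\infty),\mathbb{Q})$ is an isomorphism. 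Therefore the left vertical map is a rational isomorphism, in particular surjective.

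Besides the integral injectivity step, the one point to check carefully is that the cellular isomorphism $cl$ for Grassmannians is compatible with the suspension and inclusion maps defining the limit. This should follow from naturality of $cl$ under the morphisms $\Sigma$ and $\mathbb{P}^{n}\subset\mathbb{P}^{n+1}$.
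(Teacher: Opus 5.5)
Your argument is correct, but it does not follow the paper's route. The paper offers only the one-line justification ``as a corollary of Theorem~\ref{Thm2.2}.'' That theorem has rational coefficients, and for $d=1$ it only gives $L_qH_k(\mathcal{D}(1))_{\mathbb{Q}}\cong L_qH_k(\mathcal{D}(\infty))_{\mathbb{Q}}$ in the range $2q\le k\le 2$. It says nothing about injectivity with integer coefficients.

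You instead split the statement into an integral part and a rational part.

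\textbf{Integral injectivity.} You use only that $C_{p,1}(\mathbb{P}^n)$ is a Grassmannian, hence cellular. So $cl$ is an integral isomorphism for $k\ge 2q$, and $L_qH_k(\mathcal{D}(1))\cong H_k(BU,\mathbb{Z})$. Torsion-freeness of $H_*(BU,\mathbb{Z})$, together with the rational homology isomorphism $H_*(BU,\mathbb{Q})\to H_*(\mathcal{D}(\infty),\mathbb{Q})$, then gives injectivity for every $k\ge 2q$. This half does not depend on Sections 3--4 of the paper at all. You also do not need the identification with the total Chern class. The facts quoted in the paper, plus Bott periodicity, already suffice: the inclusion is injective on $\pi_*$, and both sides have $\pi_*$ equal to $\mathbb{Z}$ in positive even degrees and $0$ otherwise. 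So $\mathcal{D}(1)\subset\mathcal{D}(\infty)$ is a rational homotopy equivalence of simply connected spaces.

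\textbf{Rational part.} Your diagram chase correctly shows that the only new input needed is injectivity of $cl_{\mathbb{Q}}$ on $\mathcal{D}(\infty)$. That is Theorem~\ref{Thm2.1} passed to the limit over $p,n$, or equivalently Corollary~\ref{cor2.4} on the cofinal range $d\ge k/2$. Theorems~\ref{Thm2.2} and~\ref{Thm2.3} are not needed here; naturality of $cl$ suffices.

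\textbf{What your route buys.} It covers all $k\ge 2q$ and gives a rational isomorphism, not just surjectivity. The integral claim rests on standard facts (Lawson homology of cellular varieties, $H_*(BU)$) rather than on the paper's suspension machinery. The rational claim depends on the paper only through Theorem~\ref{Thm2.1}.
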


The homotopic version of the above question was proposed in \cite{Lawson-Michelsohn},
which was answered negatively in \cite{Hu-Deg2} for $d=2$
through an explicit calculation of homology groups of $\mathcal{D}(2)$.
It is too optimistic to get a positive answer to Question \ref{ques1}.

However, we have the following result.
\begin{theorem}
The induced map on the Lawson homology groups
$$
i_*:L_qH_k(\mathcal{D}(d))_{\mathbb{Q}}\to L_qH_k(\mathcal{D}(\infty))_{\mathbb{Q}}
$$
from the inclusion
$i:\mathcal{D}(d)\subset \mathcal{D}(\infty)$
is an isomorphism for $2q\leq k\leq 2d$.
\end{theorem}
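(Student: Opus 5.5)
We reduce the statement to the finite-level results of the previous section, using the fact that both $L_qH_k(\mathcal{D}(d))_{\mathbb{Q}}$ and $L_qH_k(\mathcal{D}(\infty))_{\mathbb{Q}}$ are direct limits, and that tensoring with $\mathbb{Q}$ commutes with direct limits. The key point is that the range $2q\leq k\leq 2d$ is the stable range in which Theorem \ref{Thm2.2} and Theorem \ref{Thm2.3} give isomorphisms.

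\textbf{Identifying the target.} We interpret $\mathcal{D}(\infty)$, via the filtration $\mathcal{D}(d)\subset\mathcal{D}(d+1)\subset\cdots$, as $\lim_{d'\to\infty}\mathcal{D}(d')$, with the inclusions induced by the embeddings $i$ of \eqref{eqn2.1}. Accordingly we set
$$
L_qH_k(\mathcal{D}(\infty)) := \lim_{d'\to\infty}\lim_{p,n\to\infty} L_qH_k(C_{p,d'}(\mathbb{P}^n)).
$$
The map $i_*$ is then the canonical map from the $d$-th term into this limit. It therefore suffices to show that, for every $d'\geq d$ and all $p,n$, the map $i_*\colon L_qH_k(C_{p,d'}(\mathbb{P}^n))_{\mathbb{Q}}\to L_qH_k(C_{p,d'+1}(\mathbb{P}^n))_{\mathbb{Q}}$ is an isomorphism when $2q\leq k\leq 2d$. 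Since $2d\leq 2d'$, this is exactly Theorem \ref{Thm2.2}. We must also check that these isomorphisms are compatible with the transition maps in $p$ and $n$: $\Sigma$ commutes with $i$ (Section 3), and the linear inclusion $\mathbb{P}^n\subset\mathbb{P}^{n+1}$ also commutes with adding $l_0$, provided $l_0$ is chosen compatibly.

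\textbf{Stabilization in $p$ and $n$.} Within the range $2q\leq k\leq 2d$, the transition maps of the system $\{L_qH_k(C_{p,d}(\mathbb{P}^n))_{\mathbb{Q}}\}_{p,n}$ are isomorphisms. For suspension this is Theorem \ref{Thm2.3}. For the inclusion in $n$ we use Corollary \ref{cor2.4}, which identifies each group with $H_k(C_{p,d}(\mathbb{P}^n),\mathbb{Q})$, together with naturality of $cl$ and Corollary \ref{cor2.5}. By Corollary \ref{cor2.5}, each group is isomorphic to $H_k(\prod_{i=1}^{n-p}K(\mathbb{Z},2i),\mathbb{Q})$, which is stable once $n-p$ is large compared with $k$. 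Hence both colimits can be computed at a single finite stage $(p,n)$ with $n-p\gg k$.

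\textbf{Conclusion and main obstacle.} Combining the two steps, we get a commutative square whose horizontal maps are the colimit maps in $d'$ (isomorphisms by Theorem \ref{Thm2.2} at each level) and whose vertical maps are the colimit maps in $(p,n)$. Since filtered colimits commute, $i_*$ is an isomorphism. The main obstacle is the stabilization in $n$. We must show that the map on $H_k(-,\mathbb{Q})$ induced by $\mathbb{P}^n\subset\mathbb{P}^{n+1}$ is an isomorphism in the stable range, and not merely an abstract isomorphism of the two groups. The plan is to deduce this from Lawson's computation that $\mathcal{C}_p(\mathbb{P}^n)\to\mathcal{C}_p(\mathbb{P}^{n+1})$ corresponds to the inclusion $\prod_{i\le n-p}K(\mathbb{Z},2i)\subset\prod_{i\le n+1-p}K(\mathbb{Z},2i)$, which is highly connected. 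This transfers to finite $d$ through the isomorphisms $L_qH_k(C_{p,d})_{\mathbb{Q}}\cong L_qH_k(\mathcal{C}_p)_{\mathbb{Q}}$ obtained in the proof of Theorem \ref{Thm2.2}, together with Theorem \ref{Thm2.1}.
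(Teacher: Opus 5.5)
Your proposal is correct and follows the paper's argument. Iterating Theorem~\ref{Thm2.2} shows that, for each fixed $(p,n)$, the canonical map $L_qH_k(C_{p,d}(\mathbb{P}^n))_{\mathbb{Q}}\to\lim_{d'}L_qH_k(C_{p,d'}(\mathbb{P}^n))_{\mathbb{Q}}$ is an isomorphism for $2q\leq k\leq 2d$, and passing to the colimit over $(p,n)$ (using that $i$ is compatible with $\Sigma$ and with $\mathbb{P}^n\subset\mathbb{P}^{n+1}$) gives the result. Your ``stabilization in $p$ and $n$'' step and the ``main obstacle'' you attach to it are unnecessary: a levelwise isomorphism of directed systems induces an isomorphism on colimits whether or not the transition maps in $(p,n)$ are isomorphisms, so Theorem~\ref{Thm2.3}, Corollaries~\ref{cor2.4}--\ref{cor2.5} and the Eilenberg--MacLane identification play no role.
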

\begin{proof}
The result follows from Theorem \ref{Thm2.2} by taking the limit
$p,n\to \infty$.
\end{proof}

\end{document}